\headsep \linespread{1.2}
\newtheorem{thm}{Theorem}[section]
\newtheorem{lemma}[thm]{Lemma}
\theoremstyle{definition}
\newtheorem{definition}[thm]{Definition}
\numberwithin{equation}{section}
\newcommand{\R}{\mathbb{R}}
\newcommand{\N}{\mathbb{N}}
\newcommand{\biindice}[3]%
{

\begin{array}[t]{c}
#1\\
{\scriptstyle #2}\\
{\scriptstyle #3}\\
\end{array}

}
\def\biindice#1#2#3{#1_{\textstyle{#2\atop #3}}}
\newcommand{\ds}{\displaystyle}
\theoremstyle{plain}
\newtheorem{theorem}{Theorem}[section]
\numberwithin{equation}{section}
\begin{document}
\title{\bf Sign-changing solutions for the stationary Kirchhoff problems involving the fractional Laplacian in $\R^N$
\thanks{This work was supported  by the National Natural
	Science Foundation of China  (11371160, 11501231) and the Program for Changjiang Scholars and Innovative Research Team in University (\#IRT13066).
}}
\author{Kun Cheng \thanks {Department of
Mathematics, Central China Normal University, Wuhan, 430079, China
(chengkun0010@126.com).}
\and Qi Gao
\thanks {Department of Mathematics, School of Science, Wuhan University of Technology, Wuhan, 430070, China (gaoq@whut.edu.cn).}}

\date{\today}
\maketitle

\begin{abstract}
In this paper, we study the existence of least energy sign-changing solutions for a Kirchhoff-type problem involving the fractional Laplacian operator. By using the constraint variational method and quantitative deformation lemma, we obtain a least energy nodal solution $u_b$ for the given problem. Moreover, we show that the energy of $u_b$ is strictly larger than twice the ground state energy. We also give a convergence property of $u_b$ as $b\searrow 0$, where $b$ is regarded as a parameter.

\end{abstract}
{\bf Keywords:}  {Kirchhoff equation, Fractional Laplacian, Sign-changing solutions.}

\section{\label{Int}Introduction and main results}
Here we consider the existence of least energy sign-changing solutions for the Kirchhoff type problem involving a fractional Laplacian operator as below:
\begin{equation}\label{eqs1.1}
\ds \left(a+b \iint_{\R^{2N}}\frac{|u(x)-u(y)|^2}{|x-y|^{n+2s}}dxdy\right) (-\Delta)^s u
+V(x)u=f(x,u)\ \  \mathrm{in}\   \R^{N},
\end{equation}
where $s\in(0,1)$ is fixed, $n>2s$, $a$ and $b$ are positive constants. $(-\Delta)^s$ is the fractional Laplacian operator which (up to normalization factors) may be defined along a function $\phi \in \mathcal{C}_0^{\infty}(\R^{N})$ as
\begin{equation*}
(-\Delta)^s \phi(x)=2\lim\limits_{\varepsilon \rightarrow 0^{+}}
\int_{\R^{N}\backslash B_{\varepsilon}(x)}
\frac{\phi(x)-\phi(y)}{|x-y|^{n+2s}}dy
\end{equation*}
for $x\in \R^{N}$, where $B_{\varepsilon}(x):=\{y\in \R^{N}:|x-y|<\varepsilon\}$.
For an elementary introduction to the fractional Laplacian and fractional Sobolev spaces we refer the reader
to \cite{DPV,MRS}. 

Throughout the paper, we make the following assumptions concerning the potential function $V(x)$:
\begin{enumerate}
	\item [($V_1$)]  \ $\displaystyle V \in \mathcal{C}(\R^N)$ satisfies $\inf_{x\in \R^N} V(x)\geq V_0>0$, where $V_0$ is a positive constant;
	\item [($V_2$)]  \  There exists $h>0$ such that $\displaystyle\lim_{|y|\rightarrow \infty}\text{meas}\left(\{x\in B_{h}(y):V(x)\leq c\}\right)=0$ for any $c>0$ ;
\end{enumerate}
where $B_{R}(x)$ denotes any open ball of $\R^N$ centered at $x$ with radius $R>0$, and $\text{meas}(A)$ denotes the Lebesgue measure of set $A$. 
The condition $(V_2)$, which is weaker than the coercivity assumption: $V(x)\rightarrow \infty$ as $|x|\rightarrow \infty$, was first introduced by Bartsch-Wang \cite{BW1} to overcome the lack of compactness.
Moreover, on the nonlinearity $f$ we assume
\begin{enumerate}
	\item [($f_1$)]  \ $\displaystyle f: \R^N \times \R\rightarrow \R$ is a $Carath\acute{e}odory$ function and $f(x,s)=o(|s|^3)$ as $s\rightarrow 0$;
	\item [($f_2$)]  \ For some constant $p \in (4, 2_s^*)$, $\displaystyle\lim_{s\rightarrow\infty}\frac{f(x,s)}{s^{p-1}}=0$, where $2_s^{*}=+\infty$ for $ N\le 2s$ and $2_s^*=\frac{2N}{N-2s}$ for $N>2s;$
	\item [($f_3$)]  \ $\lim\limits_{s\rightarrow \infty}\frac{F(x,s)}{s^{4}}=+\infty $ with $F(s)=\displaystyle\int_{0}^{s}f(t)dt$;
	\item [($f_4$)]  \ $\frac{f(s)}{|s|^3}$ is nondecreasing on $\R \setminus \{0\}$.
\end{enumerate}

The motivation to study problem \eqref{eqs1.1} comes from Kirchhoff equations of the type
\begin{equation}\label{eqs1.2}
\ds -\Big(a+b \int_{\Omega}|\nabla u|^2 dx\Big) \Delta u
=f(x,u)\ \  \mathrm{in}\   \Omega,
\end{equation}
where $\Omega\subset\mathbb{R}^N$ is a bounded domain, $a>0$, $b\ge0$ and $u$ satisfies some boundary conditions. The problem (\ref{eqs1.2}) is related to the stationary analogue of the Kirchhoff equation
\begin{equation}\label{eqs1.3}
\ds u_{tt}-\Big(a+b \int_{\Omega}|\nabla u|^2 dx\Big) \Delta u
=f(x,u)\ ,
\end{equation}
which was introduced by Kirchhoff \cite{K} as a generalization of the well-known D'Alembert wave equation
\begin{equation}\label{eqs1.4}
\ds \rho \frac{\partial ^2 u}{\partial t^2}
-\Big(\frac{p_0}{h}+
\frac{E}{2L} \int_{0}^{L}
\Big|\frac{\partial  u}{\partial x}\Big|^2 dx
\Big)=f(x,u)
\end{equation}
for free vibration of elastic strings. 
Here, $L$ is the length of the string, $h$ is the area of the cross section, $E$ is the Young modulus of the material, $\rho$ is mass density and $p_0$ is the initial tension. 
The problem \eqref{eqs1.4} has been proposed and studied as the fundamental equation for understanding several physical systems, where $u$
describes a process which depends on its average. It has also been used to model certain phenomena in biological systems
(see \cite{CR}). 

After the pioneer work of Lions \cite{L}, where a functional analysis approach was proposed to Eq.\eqref{eqs1.3} with Dirichlet boundary condition, the Kirchhoff equations \eqref{eqs1.3} began to receive attention from many researchers. Recently, there are fruitful studies towards problem
(\ref{eqs1.2}), especially on the existence of global classical solution, positive solutions, multiple solutions and ground state solutions, see for example \cite{DS, ACM, HZ, LY} and the references therein.
For sign-changing solutions, Zhang et al. \cite{ZP,MZ} used the method of invariant sets of descent flow to obtain the existence of sign-changing solution of (\ref{eqs1.2}).
By using the constraint variation methods and the quantitative deformation lemma, Shuai \cite{S}
studied the existence of least energy sign-changing solution for problem (\ref{eqs1.2}). For the other work about sign-changing solution of Kirchhoff type problem (\ref{eqs1.2}), we refer the reader to \cite{DPS,ML} and the reference therein.

On the other hand, we can also start the investigation to problem \eqref{eqs1.1} from the direction of nonlinear fractional Schr\"{o}dinger equation which was proposed by Laskin \cite{La1,La2}.
In recent years, there are plenty of research on the fractional Schr\"{o}dinger equation,

\begin{equation}\label{eqs1.5}
\left(-\Delta\right)^s u+V(x)u=f(x,u)\ \ \text{in}\ \ \mathbb{R}^N\ .
\end{equation}
For the existence, multiplicity and behavior of solutions to Eq.\eqref{eqs1.5}, we refer to 
\cite{BCPS1,CS,CT,CS1,FW,RS,SV,T} and the references therein. In the remarkable work of Caffarelli-Silvestre \cite{CS}, the authors expressed the nonlocal operator $(-\Delta)^s$ as a Dirichlet-Neumann map for a certain elliptic
boundary value problem with local differential operators defined on the upper
half space. The technique in \cite{CS} is a powerful tool for the study of the equations
involving fractional operators.
Later on, by using this technique, Chang-Wang \cite{CW}  obtained a sign-changing solution via the invariant sets of descent flow.

Before presenting our main result, let us first recall the {\it usual} fractional Sobolev space
$H^{s}(\R^N)$ as follows
\begin{equation*}
H^{s}(\R^{N})=\Bigg\{
u\in L^2({\R^{N}}) \ \Big| \  \iint_{\R^{2N}} \frac{|u(x)-u(y)|^2}
{|x-y|^{N+2s}}dxdy<\infty
\Bigg\},
\end{equation*}
equipped with the inner product
\begin{equation*}
\langle u,v\rangle_{H^{s}(\R^{N})}=
\|u\|^{2}_{L^2({\R^{N}})} + \iint_{\R^{2N}} \frac{(u(x)-u(y))(v(x)-v(y))}
{|x-y|^{N+2s}}dxdy\ ,
\end{equation*}
and the corresponding norm
\begin{equation*}
\|u\|_{H^{s}(\R^{N})}=\Bigg(
\|u\|^{2}_{L^2({\R^{N}})} + \iint_{\R^{2N}} \frac{|u(x)-u(y)|^2}
{|x-y|^{N+2s}}dxdy
\Bigg)^{\frac12}.
\end{equation*}
It is well known that $(H^{s}(\R^{N}),\|u\|_{H^{s}(\R^{N})})$ is a uniformly convex Hilbert space and the embedding $H^{s}(\R^{N})  \hookrightarrow L^{q}(\R^{N})$ is continuous for any
$q \in [2,2_{s}^{*}]$ (See \cite{DPV}).
Moreover, in light of Proposition 3.2 in \cite{DPV}, for $u \in H^s(\R^N)$ we have
\begin{equation}\label{eqs1.6}
\|(-\Delta)^{s/2}u\|^2_{L^2{(\R^N)}}=
\iint_{\R^{2N}} \frac{|u(x)-u(y)|^2}
{|x-y|^{N+2s}}dxdy.
\end{equation}

In this paper, we denote the fractional Sobolev space for \eqref{eqs1.1} by
\begin{equation*}
H=\Big\{u\in H^{s}(\R^N)\Big|\ \int_{\R^N} V(x)u^{2}(x)dx<+\infty \Big\}\ ,
\end{equation*}
where the inner product is given by
\begin{equation}\label{eqs1.7}
\langle u,v\rangle_{H}=
\iint_{\R^{2N}} \frac{(u(x)-u(y))(v(x)-v(y))}
{|x-y|^{N+2s}}dxdy+
\frac{1}{a}\int_{\R^N} V(x)u(x)v(x)dx\ ,
\end{equation}
and the associated norm is
\begin{equation}\label{eqs1.8}
\|u\|_{H}=\Bigg(
\iint_{\R^{2N}} \frac{|u(x)-u(y)|^2}
{|x-y|^{N+2s}}dxdy
+\frac{1}{a}\int_{\R^{N}}V(x)|u(x)|^2 dx
\Bigg)^{\frac12}.
\end{equation}
It is readily seen that $(H,\| \cdot \|_{H})$ is
a uniformly convex Hilbert space and $\mathcal{C}_0^{\infty}(\R^N) \subset H$ (see \cite{PXZ}). Moreover, thanks to (\ref{eqs1.6}), the norm $\|\cdot\|_H$ can be also written as
\begin{equation}\label{eqs1.9}
\|u\|_{H}=\Bigg(
\int_{\R^{N}} |(-\Delta)^{s/2}u|^2 dx
+\frac{1}{a}\int_{\R^{N}}V(x)u(x)^2 dx
\Bigg)^{\frac12}.
\end{equation}

%

Now we give the definition of a {\it weak} solution to \eqref{eqs1.1}:
\begin{definition}\label{D1.1}
    We say that $u\in H$ is a {\it weak} solution of \eqref{eqs1.1}, if
	\begin{align}
	\Big(a+
	b\int_{\R^{N}} |(-\Delta)^{s/2}u|^2 dx\Big)\int_{\R^{N}}&
	(-\Delta)^{s/2}u (-\Delta)^{s/2}\varphi dx\notag\\
	&+\int_{\R^N}V(x)u(x)\varphi(x)dx
	=\int_{\R^N}f(x,u)\varphi(x)dx\ ,
	\end{align}
for all $\varphi \in H$.
\end{definition}
And we will omit {\it weak} throughout this paper for convenience. Define the corresponding functional energy $I_b:H \rightarrow \R$ to problem (\ref{eqs1.1}) as below:
\begin{multline}\label{eqs1.11}
I_b(u)=\frac{a}{2}\int_{\R^{N}}
|(-\Delta)^{s/2}u|^2 dx
+\frac{b}{4}\Big(\int_{\R^{N}}
|(-\Delta)^{s/2}u|^2 dx
\Big)^2 \\
+\frac12 \int_{\R^N}V(x)u^2dx
-\int_{\R^N}F(x,u)dx.
\end{multline}
It is easy to see that $I_b$ belongs to  $\mathcal{C}^1 (H,\R)$ and the critical points of $I_b$
are the solutions of (\ref{eqs1.1}). Furthermore, if $u\in H$ is a solution of (\ref{eqs1.1}) and $u^{\pm}\neq 0$, then $u$ is a sign-changing solution of (\ref{eqs1.1}), where
$$
u^+(x)=\max \{u(x),0\}\quad \mathrm{and} \quad
u^-(x)=\min \{u(x),0\}.
$$

Our goal in this paper is to seek the least energy sign-changing solutions of (\ref{eqs1.1}). When $s=1$, $b=0$ and $a=1$, Eq. \eqref{eqs1.1} turns out to be \eqref{eqs1.5} mentioned above. There are several ways in the literature to obtain sign-changing solution for \eqref{eqs1.5}
(see \cite{BLW,BW1,BW2,NW}). 
With the application of the minimax arguments in the presence of invariant sets of a descending flow, Bartsch-Liu-Weth \cite{BLW} obtained a sign-changing solution for (\ref{eqs1.5}) when $f$ satisfies the classical Ambrosetti-Rabinowitz condition \cite{AR} and conditions imposed on $V(x)$ to ensure the compact embedding. 
However, all these methods heavily rely on the following two decompositions:
\begin{equation}\label{eqs1.13}
J(u)=J(u^+)+J(u^-),
\end{equation}
\begin{equation}\label{eqs1.14}
\langle J'(u),u^+ \rangle=\langle J'(u^+),u^+ \rangle
\quad \mathrm{and}\quad
\langle J'(u),u^- \rangle=\langle J'(u^-),u^- \rangle\ ,
\end{equation}
where $J$ is the energy functional of (\ref{eqs1.5})
given by
\begin{equation}\label{eqs1.15}
J(u)=\frac12 \int_{\R^N}\left( |\nabla u|^2 +V(x)u^2\right)dx
-\int_{\R^N}F(x,u)dx.
\end{equation}
When $b>0$, due to the nonlocal term
$(a+b\int_{\R^N}|(-\Delta)^{s/2}u|^2dx)(-\Delta)^{s}u$, the functional do not possesses the same decompositions as (\ref{eqs1.13}) and (\ref{eqs1.14}). Indeed, since $\langle u^+,u^- \rangle _{H^{s}(\R^N)}>0$ when $u^{\pm}\neq 0$, a straightforward computation yields that
\begin{equation}\label{eqs1.16}
I_b(u)>I_b(u^+)+I_b(u^-),
\end{equation}
\begin{equation}\label{eqs1.17}
\langle I'_b(u),u^+ \rangle>\langle I'_b(u^+),u^+ \rangle
\quad \mathrm{and}\quad
\langle I'_b(u),u^- \rangle>\langle I'_b(u^-),u^- \rangle.
\end{equation}
Therefore, the methods to obtain sign-changing solutions for the local problem (\ref{eqs1.5}) seem not be applicable to problem (\ref{eqs1.1}). Motivated by the work in \cite{BWe}, to get
least energy sign-changing solutions, we define the following constrained set:
\begin{equation}\label{eqs1.18}
\mathcal{N}_{nod}^b=\{u \in H \ | \ u^\pm \neq 0,\langle I'_b(u),u^ \pm\rangle =0
\}, 
\end{equation}
and consider a minimization problem of $I_b$ on $\mathcal{N}_{nod}^b$. Comparing with the work in \cite{S}, the nonlocal term in (\ref{eqs1.1}), which can be regarded as a combination of the fractional Laplacian operator $(-\Delta)^{s}$ and the nonlocal term $(\int_{\R^N}|\nabla u|^2dx)\Delta u$ that appeared in general Kirchhoff type problem (\ref{eqs1.2}),
becomes even more complicated. This will cause some difficulties in proving that $\mathcal{N}_{nod}^b$ is nonempty. Indeed, Shuai \cite{S} proved $\mathcal{N}_{nod}^b\neq \varnothing$ by using the parametric method
and the implicit theorem. However, it seems that the method in \cite{S} is not suitable for problem (\ref{eqs1.1}) due to the complexity of the nonlocal term there.
Fortunately, inspired by \cite{AN}, we prove $\mathcal{N}_{nod}^b\neq \varnothing$ via a modified Miranda's theorem (see Lemma~\ref{lm2.3}). Eventually, we prove that the minimizer of the constrained problem is also a sign-changing solution via the quantitative deformation lemma (see \cite{WM}) and degree theory (see \cite{BWe}). 

The main results can be stated as follows.
\begin{theorem}\label{th1.1}
Let $(f_1)$--$(f_4)$ hold. Then problem (1.1) possesses one least energy sign-changing solution $u_b$.
\end{theorem}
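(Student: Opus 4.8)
Here is a proof plan.

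\medskip

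\noindent The plan is to obtain $u_b$ by minimizing $I_b$ over the constraint set $\mathcal{N}_{nod}^b$ of \eqref{eqs1.18}, and then to upgrade the constrained minimizer to an unconstrained critical point of $I_b$ via a quantitative deformation argument. Write $m_b=\inf_{\mathcal{N}_{nod}^b}I_b$. I would split the work into three stages: (i) $\mathcal{N}_{nod}^b\neq\varnothing$, and more precisely, for every $u\in H$ with $u^\pm\neq0$ there is a \emph{unique} pair $(s_u,t_u)\in(0,\infty)^2$ with $s_uu^++t_uu^-\in\mathcal{N}_{nod}^b$; (ii) $m_b$ is attained at some $u$ with $u^\pm\neq0$; (iii) this minimizer satisfies $I_b'(u)=0$. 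Since any sign-changing solution $v$ automatically satisfies $v^\pm\neq0$ and $\langle I_b'(v),v^\pm\rangle=0$, hence lies in $\mathcal{N}_{nod}^b$, a critical point realizing $m_b$ is then a least energy sign-changing solution, which is the assertion.

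\medskip

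\noindent For stage (i), fix $u$ with $u^\pm\neq0$ and abbreviate $A_\pm=\iint_{\R^{2N}}|u^\pm(x)-u^\pm(y)|^2|x-y|^{-N-2s}\,dxdy$ and $C=\iint_{\R^{2N}}(u^+(x)-u^+(y))(u^-(x)-u^-(y))|x-y|^{-N-2s}\,dxdy$. Since $u^+$ and $u^-$ have disjoint supports one checks $C>0$, so $\|(-\Delta)^{s/2}(su^++tu^-)\|_{L^2}^2=s^2A_++t^2A_-+2stC$ and the Kirchhoff term genuinely couples the two half-components — precisely why \eqref{eqs1.13}–\eqref{eqs1.14} fail. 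Writing $g_1(s,t):=\langle I_b'(su^++tu^-),su^+\rangle$ and $g_2(s,t):=\langle I_b'(su^++tu^-),tu^-\rangle$, these are polynomials of degree four in $(s,t)$ apart from the nonlinear terms. By $(f_1)$, $f(x,\sigma)\sigma=o(\sigma^4)$ near $0$, so together with $\int_{\R^N}V(u^\pm)^2>0$ one gets $g_1>0$ on $\{s=\rho\}$ and $g_2>0$ on $\{t=\rho\}$ for $\rho>0$ small; by $(f_3)$, $F$ and hence $\sigma\mapsto f(x,\sigma)\sigma$ grow superquartically, giving $g_1<0$ on $\{s=R\}$ and $g_2<0$ on $\{t=R\}$ for $R$ large. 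The modified Miranda theorem, Lemma~\ref{lm2.3}, applied on $[\rho,R]^2$ then produces an interior zero $(s_u,t_u)$. Uniqueness comes from $(f_4)$: monotonicity of $\sigma\mapsto f(x,\sigma)/|\sigma|^3$ plus the strictly convex quartic form of the Kirchhoff term, through the usual ``subtract the two identities and use monotonicity'' computation, forces any two such pairs to coincide. The same hypotheses give the characterization $I_b(u)=\max_{s,t\ge0}I_b(su^++tu^-)$ for $u\in\mathcal{N}_{nod}^b$, and the inequality $\tfrac14 f(x,\sigma)\sigma-F(x,\sigma)\ge0$, both reused below.

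\medskip

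\noindent For stage (ii), take a minimizing sequence $(u_n)\subset\mathcal{N}_{nod}^b$. Since $\langle I_b'(u_n),u_n\rangle=0$, the quartic terms cancel in $I_b(u_n)-\tfrac14\langle I_b'(u_n),u_n\rangle=\tfrac a4\|(-\Delta)^{s/2}u_n\|_{L^2}^2+\tfrac14\int_{\R^N}Vu_n^2+\int_{\R^N}\big(\tfrac14 f(x,u_n)u_n-F(x,u_n)\big)$, which by the inequality above is $\ge\tfrac a4\|u_n\|_H^2$; hence $(u_n)$ is bounded in $H$. By $(V_1)$–$(V_2)$ the embedding $H\hookrightarrow L^q(\R^N)$ is compact for $q\in[2,2_s^*)$ (the Bartsch--Wang mechanism), so along a subsequence $u_n\rightharpoonup u$ in $H$ and $u_n^\pm\to u^\pm$ in $L^q(\R^N)$ for such $q$. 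Testing $\langle I_b'(u_n),u_n^\pm\rangle=0$ and using $(f_1)$–$(f_2)$ yields a uniform bound $\|u_n^\pm\|_H\ge\delta>0$, so $u^\pm\neq0$. By stage (i) there is $(s,t)\in(0,\infty)^2$ with $\bar u:=su^++tu^-\in\mathcal{N}_{nod}^b$; letting $n\to\infty$ in the constraint equations, using weak lower semicontinuity of $\|(-\Delta)^{s/2}\cdot\|_{L^2}^2$ (hence of its square) and strong $L^q$ convergence in the nonlinearity, one gets $0<s,t\le1$, whence $m_b\le I_b(\bar u)\le\liminf_n I_b(u_n)=m_b$ via the $\max$ characterization. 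So $\bar u$ attains $m_b$; relabel $u:=\bar u$, and note $m_b\ge\tfrac a4\delta^2>0$.

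\medskip

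\noindent Stage (iii) is where I expect the real work. Assume $I_b'(u)\neq0$; pick $r>0$ with $\|I_b'(v)\|\ge\mu>0$ for $\|v-u\|_H\le r$ and apply the quantitative deformation lemma to produce a continuous $\eta$ with $I_b(\eta(v))\le I_b(v)$ for all $v$, $\eta(v)=v$ outside a small ball about $u$, and $I_b(\eta(v))\le m_b-\va$ on a smaller ball about $u$, for a suitable $\va>0$. Set $g(s,t)=su^++tu^-$ and $h:=\eta\circ g$ on a small square $D$ around $(1,1)$, chosen so small that $\|g(s,t)-u\|_H$ stays controlled on $\overline D$. Then $I_b(g)<m_b$ on $\overline D\setminus\{(1,1)\}$ (by the $\max$ characterization and uniqueness), hence $I_b(h)<m_b$ on all of $\overline D$, so $h(s,t)\notin\mathcal{N}_{nod}^b$ for every $(s,t)\in\overline D$. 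On the other hand $\Phi(s,t):=(g_1(s,t),g_2(s,t))$ has $(1,1)$ as its only zero in $(0,\infty)^2$ by stage (i); the sign conditions there give $\deg(\Phi,[\rho,R]^2,0)=\pm1$, hence $\deg(\Phi,D,0)=\pm1$ by excision. Since $h=g$ on $\partial D$, the field $\Psi(s,t):=(\langle I_b'(h(s,t)),h(s,t)^+\rangle,\langle I_b'(h(s,t)),h(s,t)^-\rangle)$ agrees with $\Phi$ on $\partial D$, so $\deg(\Psi,D,0)=\pm1\neq0$ and $\Psi$ vanishes at some $(s_0,t_0)\in D$. Putting $v_0:=h(s_0,t_0)$, one checks $v_0^\pm\neq0$, so $v_0\in\mathcal{N}_{nod}^b$, contradicting $I_b(v_0)<m_b$. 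Hence $I_b'(u)=0$: $u$ is a sign-changing solution, and minimality over $\mathcal{N}_{nod}^b$ makes it least energy, so $u_b:=u$ works. The genuinely delicate points will be the boundary sign analysis that feeds Lemma~\ref{lm2.3} (caused by the coupled quartic term), retaining $u^\pm\neq0$ in the weak limit via the lower bound $\|u_n^\pm\|_H\ge\delta$, and the degree argument in (iii), since $\mathcal{N}_{nod}^b$ is not a smooth manifold and the implicit function theorem is unavailable.
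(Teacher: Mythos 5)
Your plan is correct and follows essentially the same route as the paper: nonemptiness and uniqueness of the pair $(\alpha_u,\beta_u)$ via the modified Miranda theorem (Lemma~\ref{lm2.3}), attainment of the constrained infimum by a minimizing sequence combined with the compact embedding from $(V_1)$--$(V_2)$, the projection back onto $\mathcal{N}_{nod}^b$ with $\alpha,\beta\le1$, and finally the quantitative deformation lemma plus a degree argument to show the minimizer is a genuine critical point. No substantive differences from the paper's proof.
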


Another goal of this paper is to prove that the energy of any sign-changing solution of \eqref{eqs1.1} is strictly larger than twice the ground state energy, this property is so-called energy doubling by Weth \cite{W}. Consider the semilinear equation (\ref{eqs1.5}), the conclusion is trivial. Indeed, we denote the Nehari manifold associated to (\ref{eqs1.5}) by
\begin{equation*}
\mathcal{N}=\{u \in H^1(\R^N) \setminus\{0\} |\ \langle J'(u),u\rangle=0\}
\end{equation*}
and define
\begin{equation}\label{eqs1.19}
c:=\inf_{u \in \mathcal{N}}J(u).
\end{equation}
Then, for any sign-changing solution $u\in H$ for equation (\ref{eqs1.5}), it is easy to show that $u^{\pm}\in \mathcal{N}$. Moreover, if the nonlinearity  $f(x,s)$ satisfies some conditions (see \cite{BWe}) which is analogous to $(f_1)$-$(f_4)$, we can deduce that
\begin{equation}\label{eqs1.20}
J(w)=J(w^+)+J(w^-)\geq 2 c.
\end{equation}
We point out that the minimizer of (\ref{eqs1.19}) is indeed a ground state solution of the problem
(\ref{eqs1.5}) and $c>0$ is  the least energy of all weak solutions of (\ref{eqs1.5}).
Therefore, by \eqref{eqs1.20}, it implies that the energy of any sign-changing solution of (\ref{eqs1.5}) is larger than
twice the least energy. When $s=1$ and $b>0$, a similar result was obtained by Shuai \cite{S} in a bounded domain $\Omega$. If $0<s<1$ and $\Omega= \R^N$, we are interested in that whether the property \eqref{eqs1.20} is still true for $I_b$. To answer this question, we denote the following Nehari manifold associated to (\ref{eqs1.1}) by
\begin{equation*}
\mathcal{N}^b=\{u \in H\setminus\{0\} |\ \langle I'_b(u),u\rangle=0\},
\end{equation*}
and define
\begin{equation*}
c^b:=\inf_{u \in \mathcal{N}^b} I_b(u).
\end{equation*}
Then we give the answer via the following theorem:

\begin{theorem}\label{th1.2}
	Assume $(f_1)-(f_4)$ hold, then $c^b>0$ is achieved and
	\begin{equation}\label{eqs1.21}
	I_{b}(u_b)>2c^b\ ,
	\end{equation}
	where $u_b$ is the least energy sign-changing solution obtained in Theorem 1.1. In particular, $c^b$ is
	achieved either by a positive or a negative function.
\end{theorem}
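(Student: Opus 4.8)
The plan is to prove Theorem~\ref{th1.2} in three stages: first establish that $c^b$ is positive and attained, then compare the energy of the nodal solution $u_b$ with $2c^b$ using the decomposition inequalities \eqref{eqs1.16}--\eqref{eqs1.17}, and finally argue that the minimizer on $\mathcal N^b$ cannot itself change sign.

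\textbf{Step 1: $c^b>0$ and attainment.} Using $(f_1)$ and $(f_2)$, for any $u\in\mathcal N^b$ the constraint $\langle I'_b(u),u\rangle=0$ reads
\begin{equation*}
a\|u\|^2_{\ast}+b\|u\|^4_{\ast}+\int_{\R^N}V(x)u^2\,dx=\int_{\R^N}f(x,u)u\,dx,
\end{equation*}
where $\|u\|^2_{\ast}:=\int_{\R^N}|(-\Delta)^{s/2}u|^2\,dx$. Combining this with the growth bounds on $f$ (so that $\int f(x,u)u\le \eps\|u\|_H^2+C_\eps\|u\|_H^p$ with $p>4$) and the continuous embedding $H\hookrightarrow L^p$, one gets a uniform lower bound $\|u\|_H\ge\rho_0>0$ on $\mathcal N^b$, hence $c^b>0$ follows from the standard estimate $I_b(u)=I_b(u)-\frac14\langle I'_b(u),u\rangle\ge \frac{a}{4}\|u\|^2_\ast+\frac14\int V u^2+\int(\frac14 f(x,u)u-F(x,u))\,dx$, where the last integrand is nonnegative by $(f_4)$. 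For attainment, take a minimizing sequence $\{u_n\}\subset\mathcal N^b$; it is bounded in $H$ by the same Ambrosetti--Rabinowitz-type computation together with $(f_3)$, so up to a subsequence $u_n\rightharpoonup u$ in $H$. Here I would invoke condition $(V_2)$ (Bartsch--Wang): it gives compactness of the embedding $H\hookrightarrow L^q(\R^N)$ for $q\in[2,2^*_s)$, so $u_n\to u$ strongly in $L^p$; this forces $u\neq 0$ (by $\|u_n\|_H\ge\rho_0$ and the growth of $f$), $u\in\mathcal N^b$, and lower semicontinuity of the norm terms together with the strong convergence of the $\int F$ and $\int fu$ terms yields $I_b(u)\le c^b$, hence $I_b(u)=c^b$ and $u$ is a ground state.

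\textbf{Step 2: $I_b(u_b)>2c^b$.} Let $u_b$ be the least energy nodal solution from Theorem~\ref{th1.1}, so $u_b^\pm\neq0$ and $\langle I'_b(u_b),u_b^\pm\rangle=0$. The point is that $u_b^+$ and $u_b^-$ are not in general on $\mathcal N^b$, but by \eqref{eqs1.17} we have $\langle I'_b(u_b^+),u_b^+\rangle<\langle I'_b(u_b),u_b^+\rangle=0$ and similarly for $u_b^-$. Using the fibering map $t\mapsto\langle I'_b(tu_b^+),tu_b^+\rangle$ — which by $(f_1)$ is positive for small $t$ and, by $(f_3)$--$(f_4)$, has a unique positive zero $t^+$ with $t^+u_b^+$ maximizing $I_b$ along the ray — one deduces $t^+\in(0,1)$ (because the map is already negative at $t=1$), and likewise $t^-\in(0,1)$. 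Then
\begin{equation*}
2c^b\le I_b(t^+u_b^+)+I_b(t^-u_b^-)\le I_b(t^+u_b^+)+I_b(t^-u_b^-)+\big(\text{positive cross term}\big)<I_b(u_b),
\end{equation*}
where the middle step uses that $I_b(tu_b^\pm)$ is increasing on $(0,t^\pm]$ so $I_b(t^\pm u_b^\pm)\le I_b(u_b^\pm)$ (strict unless $t^\pm=1$, which is excluded), combined with the strict inequality \eqref{eqs1.16} that $I_b(u_b)>I_b(u_b^+)+I_b(u_b^-)$. Care must be taken to route the monotonicity through the correct nonlocal fibering map, since the $b$-term makes $I_b(tu)$ a quartic rather than the usual quadratic-minus-superquadratic; this is exactly where $(f_4)$ (monotonicity of $f(s)/|s|^3$) is needed to guarantee uniqueness of $t^\pm$ and the sign pattern of the fibering map.

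\textbf{Step 3: $c^b$ is achieved by a function of constant sign.} Let $u$ be the ground state from Step~1. If $u$ changed sign, then $u^\pm\neq0$ and, repeating the fibering argument of Step~2, there would exist $s^+,s^-\in(0,1)$ with $s^+u^+,s^-u^-$ admissible test elements whose energies sum to something strictly below $I_b(u)=c^b$ while each $s^\pm u^\pm$ can be dilated onto $\mathcal N^b$ — more carefully, one shows $w:=s^+u^++s^-u^-$ or a suitable rescaling lies in $\mathcal N^b$ with $I_b(w)<c^b$, contradicting minimality; alternatively and more cleanly, $|u|\in H$ satisfies $\langle I'_b(|u|),|u|\rangle\le\langle I'_b(u),u\rangle=0$ with strict inequality coming from $\langle u^+,u^-\rangle_{H^s}>0$, so the unique rescaling $\tau|u|$ onto $\mathcal N^b$ has $\tau\le1$ and $I_b(\tau|u|)\le I_b(|u|)<I_b(u)=c^b$ — again a contradiction. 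Hence $u$ does not change sign; replacing $u$ by $-u$ if necessary it is either positive or negative, which is the final assertion. The main obstacle throughout is handling the quartic nonlocal fibering map $t\mapsto I_b(tu)$: one must verify it has a unique critical point which is a strict global maximum on $(0,\infty)$, and that $tu$ is forced below $t=1$ whenever $\langle I'_b(u),u\rangle<0$ — all of which hinge on carefully exploiting $(f_3)$ and the monotonicity $(f_4)$, together with the sign of the nonlocal cross term $\iint\frac{(u^+(x)-u^+(y))(u^-(x)-u^-(y))}{|x-y|^{N+2s}}\,dx\,dy>0$.
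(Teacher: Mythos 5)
Your overall architecture matches the paper's: attain $c^b$ on $\mathcal{N}^b$ using the $(V_2)$-compactness, project $u_b^\pm$ onto $\mathcal{N}^b$ with scaling factors $t^\pm\in(0,1)$ (which follow from $\langle I_b'(u_b^\pm),u_b^\pm\rangle<\langle I_b'(u_b),u_b^\pm\rangle=0$ together with a Lemma~\ref{lm3.2}-type argument), and rule out sign-changing minimizers of $c^b$ via the resulting strict gap. However, the justification you give for the decisive inequality in Step~2 is wrong. You claim that $I_b(tu_b^\pm)$ is increasing on $(0,t^\pm]$ and deduce $I_b(t^\pm u_b^\pm)\le I_b(u_b^\pm)$; but $t^\pm$ is by construction the unique critical point, hence the global \emph{maximum}, of the fibering map $t\mapsto I_b(tu_b^\pm)$ on $(0,\infty)$, so the correct inequality is the reverse one, $I_b(t^\pm u_b^\pm)\ge I_b(u_b^\pm)$. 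Consequently, combining your (false) inequality with \eqref{eqs1.16} does not close the argument: from $I_b(t^\pm u_b^\pm)\ge I_b(u_b^\pm)$ and $I_b(u_b)>I_b(u_b^+)+I_b(u_b^-)$ nothing follows about $I_b(t^+u_b^+)+I_b(t^-u_b^-)$ versus $I_b(u_b)$.

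The repair is the route the paper actually takes, and which your ``positive cross term'' parenthesis gestures at: set $w:=t^+u_b^++t^-u_b^-$, note $w^\pm=t^\pm u_b^\pm$, and use the superadditivity \eqref{eqs1.16} to get $I_b(t^+u_b^+)+I_b(t^-u_b^-)<I_b(w)$; then invoke Lemma~\ref{lm3.3}, which says that $(1,1)$ is the unique global maximum of $(\alpha,\beta)\mapsto I_b(\alpha u_b^+ +\beta u_b^-)$ on $\R_+\times\R_+$ because $u_b\in\mathcal{N}^b_{nod}$, so that $I_b(w)\le I_b(u_b^++u_b^-)=I_b(u_b)$. In other words, the comparison must be routed through the two-parameter map of Lemma~\ref{lm3.3}, not through the one-dimensional fibering maps of $u_b^+$ and $u_b^-$ separately. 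A secondary caveat: your ``cleaner'' alternative in Step~3 via $|u|$ needs $\int_{\R^N} f(x,|u|)|u|\,dx\ge\int_{\R^N} f(x,u)u\,dx$, which does not follow from $(f_1)$--$(f_4)$ since $f$ is not assumed odd in $s$; your first argument there is the sound one (a sign-changing minimizer of $c^b$, being a critical point, would lie in $\mathcal{N}^b_{nod}$ and hence have energy at least $c^b_{nod}>2c^b>c^b$, a contradiction).
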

It is obvious that the energy of the sign-changing solution $u_b$ obtained in Theorem 1.1 depends
on $b$. It will be of interest to make this dependency precise. This will be the theme of our next result. Namely,
we give a convergence property of $u_b$ as $b\rightarrow 0$, which indicates
some relationship between $b>0$ and $b = 0$ for problem \eqref{eqs1.1}.

\begin{theorem}\label{th1.3}
	Let $f$ satisfies $(f_1)-(f_4)$. For any sequence $\{b_n\}$ with $b_n\rightarrow 0$ as $n\rightarrow \infty$, there exists a subsequence, still denoted by $\{b_n\}$, such that $u_{b_n}$ converges to $u_0$ strongly in $H$ as $n\rightarrow \infty$, where $u_0$ is a least energy sign-changing solution to the following problem
	\begin{equation}\label{eqs1.22}
	-a (\Delta)^s u+V(x)u=f(x,u)\ \  \mathrm{in}\   \R^{N}.
	\end{equation}
\end{theorem}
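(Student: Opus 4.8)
The plan is the standard ``send the parameter to zero'' scheme: derive uniform estimates for the nodal solutions $u_{b_n}$, extract a convergent subsequence, identify the limit as a solution of \eqref{eqs1.22}, check that it is still sign-changing, and match the energy levels. Write $I_0$ for the functional \eqref{eqs1.11} with $b=0$, $\mathcal N_{nod}^0$ for the associated nodal set, and $m_b:=\inf_{\mathcal N_{nod}^b}I_b$ for $b\ge 0$, so that $I_{b_n}(u_{b_n})=m_{b_n}$ by Theorem~\ref{th1.1}; we may assume $0<b_n\le 1$. First I would bound $\{\|u_{b_n}\|_H\}$. Fix $w\in H$ with $w^\pm\neq 0$. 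Since $b\mapsto I_b(v)$ is nondecreasing, $I_{b_n}\le I_1$ pointwise, and $I_1(\alpha w^++\beta w^-)\to-\infty$ as $\alpha^2+\beta^2\to\infty$ by $(f_3)$; taking $\alpha_n,\beta_n>0$ with $\alpha_nw^++\beta_nw^-\in\mathcal N_{nod}^{b_n}$ from Lemma~\ref{lm2.3} gives $m_{b_n}\le I_{b_n}(\alpha_nw^++\beta_nw^-)\le\sup_{\alpha,\beta\ge0}I_1(\alpha w^++\beta w^-)=:M<\infty$. On the other hand $\langle I_{b_n}'(u_{b_n}),u_{b_n}\rangle=0$ together with $(f_4)$ (which yields $\tfrac14 f(x,s)s-F(x,s)\ge 0$) gives $m_{b_n}=I_{b_n}(u_{b_n})-\tfrac14\langle I_{b_n}'(u_{b_n}),u_{b_n}\rangle\ge\tfrac a4\|u_{b_n}\|_H^2$, so $\|u_{b_n}\|_H^2\le 4M/a$; in particular $b_n\int_{\R^N}|(-\Delta)^{s/2}u_{b_n}|^2dx\to 0$. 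Passing to a subsequence, $u_{b_n}\rightharpoonup u_0$ in $H$, and by the compact embedding $H\hookrightarrow L^q(\R^N)$, $q\in[2,2_s^*)$, furnished by $(V_1)$--$(V_2)$, also $u_{b_n}\to u_0$ in every such $L^q$ and a.e.; letting $n\to\infty$ in Definition~\ref{D1.1} with $b=b_n$ (the Kirchhoff coefficient tends to $a$, the quadratic forms pass by weak convergence, and $\int f(x,u_{b_n})\varphi\to\int f(x,u_0)\varphi$ by $(f_1)$--$(f_2)$ and the $L^q$-convergence) shows $u_0$ solves \eqref{eqs1.22}.

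Next I would check that $u_0$ is genuinely sign-changing. Testing the identities $\langle I_{b_n}'(u_{b_n}),u_{b_n}^\pm\rangle=0$ and discarding the nonnegative cross terms generated by $\langle(-\Delta)^{s/2}u_{b_n}^+,(-\Delta)^{s/2}u_{b_n}^-\rangle\ge 0$ (the sign phenomenon behind \eqref{eqs1.16}--\eqref{eqs1.17}) yields $a\|u_{b_n}^\pm\|_H^2\le\int f(x,u_{b_n}^\pm)u_{b_n}^\pm$; with $(f_1)$--$(f_2)$, the Sobolev embedding, and the bound above this forces $\|u_{b_n}^\pm\|_H\ge\delta>0$ uniformly in $n$. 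Since $v\mapsto v^\pm$ is continuous on $L^q$ and $\int f(x,u_{b_n}^\pm)u_{b_n}^\pm\to\int f(x,u_0^\pm)u_0^\pm$, the resulting bound $a\delta^2\le\int f(x,u_0^\pm)u_0^\pm$ rules out $u_0^\pm\equiv 0$. Hence $u_0$ is a sign-changing solution of \eqref{eqs1.22}; a weak solution automatically satisfies $\langle I_0'(u_0),u_0^\pm\rangle=0$, so $u_0\in\mathcal N_{nod}^0$ and $I_0(u_0)\ge m_0$.

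For the reverse inequality I would argue on energy levels. Weak lower semicontinuity of $\|(-\Delta)^{s/2}\cdot\|_{L^2}^2$, Fatou's lemma for $\int Vu_{b_n}^2$, $\int F(x,u_{b_n})\to\int F(x,u_0)$, and $b_n\int|(-\Delta)^{s/2}u_{b_n}|^2\to 0$ give $I_0(u_0)\le\liminf_n I_{b_n}(u_{b_n})=\liminf_n m_{b_n}$. To complement this I claim $\limsup_n m_{b_n}\le m_0$: given $\eta>0$, choose $v\in\mathcal N_{nod}^0$ with $I_0(v)\le m_0+\eta$; by Lemma~\ref{lm2.3} there are $s_n,t_n>0$ with $s_nv^++t_nv^-\in\mathcal N_{nod}^{b_n}$, and these pairs are bounded (because $(s,t)\mapsto I_{b_n}(sv^++tv^-)\le I_0(sv^++tv^-)+C(s^2+t^2)^2\to-\infty$ uniformly in $n$, while $I_{b_n}(s_nv^++t_nv^-)\ge I_{b_n}(v)\ge I_0(v)$) and bounded away from $0$ (by $(f_1)$), so every limit point solves the $b=0$ fibering system for $v$, whose unique solution is $(1,1)$ since $v\in\mathcal N_{nod}^0$; hence $(s_n,t_n)\to(1,1)$ and $m_{b_n}\le I_{b_n}(s_nv^++t_nv^-)\to I_0(v)\le m_0+\eta$. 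Letting $\eta\downarrow 0$ and chaining the inequalities, $m_0\le I_0(u_0)\le\liminf_n m_{b_n}\le\limsup_n m_{b_n}\le m_0$, so $I_0(u_0)=m_0$ — i.e.\ $u_0$ is a least energy sign-changing solution of \eqref{eqs1.22} — and $m_{b_n}\to m_0$. Finally, $I_{b_n}(u_{b_n})\to I_0(u_0)$ together with $\int F(x,u_{b_n})\to\int F(x,u_0)$ and $b_n\int|(-\Delta)^{s/2}u_{b_n}|^2\to 0$ forces $\|u_{b_n}\|_H\to\|u_0\|_H$, which with $u_{b_n}\rightharpoonup u_0$ and the uniform convexity of $H$ upgrades the convergence to strong in $H$.

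The step I expect to be the main obstacle is the upper estimate $\limsup_n m_{b_n}\le m_0$: controlling the projection parameters $(s_n,t_n)$ onto the nonlocally perturbed constraint $\mathcal N_{nod}^{b_n}$ uniformly in $n$ and proving $(s_n,t_n)\to(1,1)$ is precisely where the complicated nonlocal term — already the source of difficulty in showing $\mathcal N_{nod}^b\neq\varnothing$ — interacts with $b_n\to 0$, and it requires Lemma~\ref{lm2.3} to be quantitative enough that the induced fibering maps converge. A secondary delicate point is excluding vanishing of $u_0^\pm$, for which the compactness coming from $(V_2)$ is indispensable.
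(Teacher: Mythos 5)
Your proposal is correct and follows essentially the same route as the paper: a uniform bound on $\|u_{b_n}\|_H$ via a fixed sign-changing test function projected onto $\mathcal N_{nod}^{b_n}$, extraction of a weak limit $u_0$ solving \eqref{eqs1.22} with $u_0^{\pm}\neq 0$ by the compactness from $(V_1)$--$(V_2)$, and the energy matching by projecting a (near-)minimizer of the limit problem onto $\mathcal N_{nod}^{b_n}$ and showing the fibering parameters tend to $(1,1)$. The only differences are cosmetic — you use near-minimizers of $m_0$ where the paper first constructs an exact least-energy nodal solution $v_0$ of \eqref{eqs1.22}, and you spell out the final norm-convergence/uniform-convexity step that the paper leaves implicit.
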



\vskip 0.2cm

The plan of this paper is as follows: Section 2 reviews some prelimimary lemmas, Section 3 covers the proof of the achievement of least energy for the contraint problem \eqref{eqs3.20}, and Section 4 is devoted to the proofs of our main theorems.

\section{Preliminaries}

In this section, we first present a modified embedding lemma which is similar to Lemma 2.1 and Theorem 2.1 in \cite{PXZ}, and we omit the proof here.
\begin{lemma}\label{lm2.1}
(i) Suppose that $(V_1)$ holds. Let $q \in [2,2_{s}^{*}]$, then the embeddings
$$
H \hookrightarrow H^{s}(\R^{N}) \hookrightarrow L^{q}(\R^{N})
$$
are continuous, with $min\{1, V_0\}\|u\|^{2}_{{H^{s}}(\R^N)} \le \|u\|^{2}_{H}$ for all $u \in H$. In particular, there exists a constant $C_{q}>0$ such that
\begin{equation}\label{eqS2.1}
\|u\|_{L^{q}(\R^{N})} \le C_{q}\|u\|_{H}\ \ for\ all\  u\in H.
\end{equation}
Moreover, if $q \in [1,2^{*}_s)$, then the embedding $H \hookrightarrow \hookrightarrow L^{q}(B_R)$ is compact for any $R>0$.

(ii) Suppose that $(V_1)-(V_2)$ hold. Let $q \in [2,2_{s}^{*})$ be fixed and $\{u_n\}$ be a bounded sequence in H, then there exists $u \in H \cap L^{q}(\R^{N})$ such that, up to a subsequence,
$$
u_n \rightarrow u\ \ strongly\ in\ L^{q}(\R^{N})\ as\ n\rightarrow \infty.
$$
\end{lemma}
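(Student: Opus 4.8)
The plan is to deduce both parts from standard facts about the fractional Sobolev spaces $H^s$ on $\R^N$ and on bounded Lipschitz domains (see \cite{DPV}), combined with the structural hypotheses $(V_1)$ and $(V_2)$; this is the fractional counterpart of the scheme introduced by Bartsch--Wang \cite{BW1} and used in \cite{PXZ}.

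\textbf{Part (i).} The Gagliardo seminorms appearing in $\|\cdot\|_H$ and in $\|\cdot\|_{H^s(\R^N)}$ coincide, while $(V_1)$ gives $\tfrac1a\int_{\R^N}Vu^2\ge\tfrac{V_0}{a}\|u\|_{L^2(\R^N)}^2$; a termwise comparison of the two quadratic forms therefore yields the asserted inequality (in particular $\|u\|_{H^s(\R^N)}^2\le C\|u\|_H^2$ for a fixed $C>0$), so $H\hookrightarrow H^s(\R^N)$ continuously. Composing with the continuous embedding $H^s(\R^N)\hookrightarrow L^q(\R^N)$, valid for $q\in[2,2_s^*]$ (\cite{DPV}), gives \eqref{eqS2.1}. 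For the local compactness, the restriction map $H^s(\R^N)\to H^s(B_R)$ is bounded, and since $B_R$ is a bounded Lipschitz domain the embedding $H^s(B_R)\hookrightarrow\hookrightarrow L^q(B_R)$ is compact for every $q\in[1,2_s^*)$ (Rellich--Kondrachov for fractional spaces, \cite{DPV}); composing gives $H\hookrightarrow\hookrightarrow L^q(B_R)$.

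\textbf{Part (ii).} Since $H$ is a (reflexive) Hilbert space and $\{u_n\}$ is bounded, after passing to a subsequence $u_n\rightharpoonup u$ in $H$ for some $u\in H$; by (i), $u\in L^q(\R^N)$, $u_n\to u$ in $L^2(B_R)$ for every $R>0$, and (along a further subsequence) a.e. Put $w_n:=u_n-u$, so $w_n\rightharpoonup0$ in $H$ and $M:=\sup_n\|w_n\|_H^2<\infty$. As $\{w_n\}$ is bounded in $L^{2_s^*}(\R^N)$ by \eqref{eqS2.1}, the interpolation inequality $\|w_n\|_{L^q}\le\|w_n\|_{L^2}^{\theta}\|w_n\|_{L^{2_s^*}}^{1-\theta}$ (with $\theta\in(0,1)$ determined by $q$) reduces the claim to proving $\|w_n\|_{L^2(\R^N)}\to0$ (when $N\le2s$, replace $2_s^*$ by any fixed finite exponent $>2$). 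Fix a countable family $\{y_i\}\subset\R^N$ with $\R^N=\bigcup_iB_h(y_i)$ and $\sum_i\chi_{B_h(y_i)}\le\kappa$, where $\kappa=\kappa(N,h)$. The translation-invariant fractional Sobolev inequality on a ball of radius $h$ gives $\|v\|_{L^{2_s^*}(B_h(y))}\le C_h\|v\|_{H^s(B_h(y))}$ uniformly in $y$, and summing over the covering — the bounded overlap also controlling the Gagliardo double integral, since $x,z\in B_h(y)$ forces $|x-z|<2h$ — yields $\sum_i\|v\|_{H^s(B_h(y_i))}^2\le\kappa'\|v\|_{H^s(\R^N)}^2$ for all $v\in H^s(\R^N)$. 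Now fix $\varepsilon>0$. Choose $c>0$ with $aM/c<\varepsilon/3$; by $(V_2)$ there is $R_0>0$ such that $\mathrm{meas}\{x\in B_h(y):V(x)\le c\}\le\delta$ for $|y|\ge R_0$, where $\delta>0$ is fixed so small that $\delta^{1-2/2_s^*}C_h^2\kappa'M<\varepsilon/3$. Writing $A_i:=\{x\in B_h(y_i):V(x)\le c\}$, Hölder's inequality gives, for $|y_i|\ge R_0$, $\int_{A_i}w_n^2\le|A_i|^{1-2/2_s^*}\|w_n\|_{L^{2_s^*}(B_h(y_i))}^2\le\delta^{1-2/2_s^*}C_h^2\|w_n\|_{H^s(B_h(y_i))}^2$; since every $x$ with $|x|\ge R_0+h$ and $V(x)\le c$ lies in some such $A_i$, summing over $\{i:|y_i|\ge R_0\}$ gives $\int_{\{|x|\ge R_0+h,\,V\le c\}}w_n^2\le\delta^{1-2/2_s^*}C_h^2\kappa'M<\varepsilon/3$. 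On $\{V>c\}$ one has $\int_{\{V>c\}}w_n^2\le\tfrac1c\int_{\R^N}Vw_n^2\le\tfrac{a}{c}\|w_n\|_H^2\le aM/c<\varepsilon/3$, and $\int_{B_{R_0+h}}w_n^2\to0$ by the local compactness of (i). Adding the three estimates gives $\|w_n\|_{L^2(\R^N)}^2<\varepsilon$ for all large $n$, as required.

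\textbf{Main obstacle.} Part (i) is essentially bookkeeping with known embedding theorems; the real work is Part (ii), namely upgrading the pointwise decay $\mathrm{meas}\{V\le c\}\to0$ on distant unit balls (furnished by $(V_2)$) into a bound on the $L^2$-tail of $w_n$ that is \emph{uniform in $n$}. The mechanism that achieves this is the interplay of Hölder's inequality on each ball with the translation-invariant local Sobolev inequality and the bounded overlap of the covering, which allows the local estimates to be summed against the single uniform bound $\|w_n\|_{H^s(\R^N)}^2\le M$; the remaining steps are routine.
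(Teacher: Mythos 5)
Your proof is correct; the paper itself omits the argument for this lemma (deferring to Lemma 2.1 and Theorem 2.1 of \cite{PXZ}), and what you have written is precisely the standard Bartsch--Wang scheme \cite{BW1} adapted to the fractional setting: part (i) by comparison of quadratic forms plus the known $H^s$ embeddings, and part (ii) by splitting $\R^N$ into $\{V>c\}$, a large ball handled by local compactness, and the far-out sublevel sets handled via H\"older together with a bounded-overlap covering by balls of radius $h$ and the uniform local Sobolev inequality. This is exactly the intended (omitted) proof, with the covering argument being the right adaptation of \cite{PXZ} to the hypothesis $(V_2)$; the only loose ends are harmless bookkeeping constants (e.g.\ $\|w_n\|_{H^s}^2\le\max\{1,a/V_0\}\|w_n\|_H^2$ rather than $\le M$), which are absorbed into the choice of $\delta$.
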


Now we recall the Miranda Theorem (see \cite{M1}):
\begin{lemma}\label{lm2.2}
Let $G=\{x=(x_1, x_2, \dots, x_N ) \in \R^{N}:\ |x_i|<L,\ for\ 1\le i \le N \}$. Suppose that the mapping $F=(f_1,f_2,\cdots,f_n):\overline{G}\rightarrow \R^{N}$ is continuous on the closure $\overline{G}$ such that $F(x) \neq \theta :=(0,0,\cdots,0)$ for $x$ on the boundary $\partial{G}$, and
$$
f_i (x_1,x_2,\cdots,x_{i-1},-L,x_{i+1},\cdots,x_N)
\ge 0 \ for\ 1\le i \le N,
$$
$$
f_i (x_1,x_2,\cdots,x_{i-1},+L,x_{i+1},\cdots,x_N)
\le 0 \ for\ 1\le i \le N,
$$
then $F(x)=0$ has a solution in $G$.
\end{lemma}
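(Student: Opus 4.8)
The plan is to deduce the statement from the Brouwer degree. Since $F\neq\theta$ on $\partial G$, the degree $\deg(F,G,\theta)$ is well defined, and it suffices to prove that it is nonzero: the solution property of the degree then produces a point $x\in G$ with $F(x)=\theta$, which is exactly the conclusion.

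To compute $\deg(F,G,\theta)$, I would connect $F$ to the linear map $-\mathrm{id}$ through the affine homotopy
\[
H(t,x)=(1-t)F(x)-tx,\qquad t\in[0,1],\ x\in\overline{G},
\]
and check that $H(t,\cdot)$ never vanishes on $\partial G$. For $t=0$ this is precisely the hypothesis $F\neq\theta$ on $\partial G$. For $t\in(0,1]$, let $x\in\partial G$ and choose an index $i$ with $x_i=L$ or $x_i=-L$. In the first case the $i$-th coordinate of $H(t,x)$ is $(1-t)f_i(x_1,\dots,x_{i-1},L,x_{i+1},\dots,x_N)-tL\le -tL<0$, by the sign hypothesis on the face $x_i=L$; in the second case it equals $(1-t)f_i(x_1,\dots,x_{i-1},-L,x_{i+1},\dots,x_N)+tL\ge tL>0$. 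In either case $H(t,x)\neq\theta$, so the homotopy is admissible. Homotopy invariance then gives $\deg(F,G,\theta)=\deg(-\mathrm{id},G,\theta)$, and since $G$ is a box centered at the origin and $-\mathrm{id}$ is a linear isomorphism with $\det(-\mathrm{id})=(-1)^N$, we obtain $\deg(F,G,\theta)=(-1)^N\neq 0$. Hence $F$ has a zero in $G$.

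The main obstacle is essentially nil on the analytic side: this is the classical Poincar\'e--Miranda theorem, logically equivalent to Brouwer's fixed point theorem. The only point demanding care is the boundary verification above, since a point of $\partial G$ may lie on several faces simultaneously (edges and corners); one simply notes that it suffices to pick one face containing the point and use the corresponding coordinate of $H$, and that the degenerate value $t=0$ must be handled separately via $F\neq\theta$ on $\partial G$.

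If one prefers to avoid degree theory, an alternative is a direct retraction argument: assuming $F$ has no zero on $\overline{G}$, one follows the ray from $x$ in the direction $-F(x)$ until it meets $\partial G$; the sign conditions on the faces guarantee that the landing point depends continuously on $x$ and is the identity on $\partial G$, producing a retraction of $\overline{G}$ onto $\partial G$ and contradicting the no-retraction theorem. A third route is a Sperner-type combinatorial argument on a fine cubical subdivision of $\overline{G}$, passing to the limit along a sequence of approximate zeros. I would present the degree-theoretic version, as it is the shortest and dovetails with the degree-theoretic tools already used elsewhere in the paper.
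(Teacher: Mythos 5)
Your argument is correct: the homotopy $H(t,x)=(1-t)F(x)-tx$ is admissible on $\partial G$ by exactly the coordinate-wise sign check you give, so $\deg(F,G,\theta)=\deg(-\mathrm{id},G,\theta)=(-1)^N\neq 0$ and a zero exists. The paper itself states Lemma~\ref{lm2.2} without proof (citing Miranda), but its proof of the variant Lemma~\ref{lm2.3} uses precisely this homotopy-to-$(-x+\vec a)$ degree argument, so your route coincides with the paper's.
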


Inspired by the Miranda Theorem above, we have the following variant of Miranda Lemma:

\begin{lemma}\label{lm2.3}
	
Let $\mathcal{G}=\{x=(x_1, x_2, \dots, x_N ) \in \R^{N}:\ r<x_i<R,\ for\ 1\le i \le N \}$. Suppose that the mapping $F=(f_1,f_2,\cdots,f_n):\overline{\mathcal{G}} \rightarrow \R^{N}$ is continuous on the closure $\overline{\mathcal{G}}$ such that $F(x) \neq \theta:=(0,0,\cdots,0)$ for $x$ on the boundary $\partial{\mathcal{G}}$, and
$$
f_i (x_1,x_2,\cdots,x_{i-1},r,x_{i+1},\cdots,x_N)
\ge 0 \ for\ 1\le i \le N,
$$
$$
f_i (x_1,x_2,\cdots,x_{i-1},R,x_{i+1},\cdots,x_N)
\le 0 \ for\ 1\le i \le N.
$$
Then $F(x)=0$ has a solution in $\mathcal{G}$.
	
\end{lemma}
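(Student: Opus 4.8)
The plan is to derive Lemma~\ref{lm2.3} from the classical Miranda Theorem (Lemma~\ref{lm2.2}) by an affine change of variables that maps the cube $\mathcal{G}=(r,R)^N$ onto a centered cube $G=(-L,L)^N$. Concretely, set $L=\frac{R-r}{2}$ and let $c=\frac{r+R}{2}$ be the center of $\mathcal{G}$; define the affine bijection $\Phi:\overline{G}\to\overline{\mathcal{G}}$ by $\Phi(y)=y+c\cdot(1,1,\dots,1)$, i.e. $x_i=y_i+c$. Then $y_i=-L$ corresponds to $x_i=r$ and $y_i=+L$ corresponds to $x_i=R$, and $\Phi$ carries $\partial G$ onto $\partial\mathcal{G}$ homeomorphically.

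Next I would define $G_i:\overline{G}\to\R$ by $G_i(y)=f_i(\Phi(y))$ and set $\widetilde F=(G_1,\dots,G_N)$. Since $\Phi$ is continuous and $F$ is continuous on $\overline{\mathcal{G}}$, the composition $\widetilde F=F\circ\Phi$ is continuous on $\overline{G}$. Because $\Phi(\partial G)=\partial\mathcal{G}$ and $F\neq\theta$ on $\partial\mathcal{G}$, we get $\widetilde F\neq\theta$ on $\partial G$. The two sign conditions transcribe directly: for $1\le i\le N$,
\begin{align*}
G_i(y_1,\dots,y_{i-1},-L,y_{i+1},\dots,y_N)
&=f_i(y_1+c,\dots,r,\dots,y_N+c)\ge 0,\\
G_i(y_1,\dots,y_{i-1},+L,y_{i+1},\dots,y_N)
&=f_i(y_1+c,\dots,R,\dots,y_N+c)\le 0,
\end{align*}
which are exactly the hypotheses of Lemma~\ref{lm2.2} for $\widetilde F$ on $G$.

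Applying Lemma~\ref{lm2.2} to $\widetilde F$ yields a point $y^*\in G$ with $\widetilde F(y^*)=\theta$. Setting $x^*=\Phi(y^*)\in\mathcal{G}$, we have $F(x^*)=\widetilde F(y^*)=\theta$, which is the desired conclusion. I would also note the degenerate possibility $r\ge R$: if $r\ge R$ the set $\mathcal{G}$ is empty and the statement is vacuous, so we may assume $r<R$, ensuring $L>0$ so that $G$ is a genuine cube and Lemma~\ref{lm2.2} applies.

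There is no real obstacle here; the proof is a bookkeeping exercise in translating coordinates. The only point that deserves a line of care is checking that the translation sends the correct face of $G$ to the correct face of $\mathcal{G}$ (so that the $\ge 0$ condition on the lower face and the $\le 0$ condition on the upper face line up after the substitution), and that the boundary is preserved so the nonvanishing hypothesis transfers. Since $\Phi$ is an isometry, these are immediate.
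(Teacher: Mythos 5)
Your proof is correct, but it takes a genuinely different route from the paper. You reduce Lemma~\ref{lm2.3} to the already-quoted Miranda Theorem (Lemma~\ref{lm2.2}) by the translation $x=y+\tfrac{r+R}{2}(1,\dots,1)$ with $L=\tfrac{R-r}{2}$, checking that continuity, the nonvanishing on the boundary, and the two sign conditions on the faces all transfer under the substitution; this is a clean two-line bookkeeping argument, and your remark about the degenerate case $r\ge R$ is a sensible (if pedantic) addition. The paper instead proves the lemma from scratch by degree theory: it forms the homotopy $H(x,t)=(1-t)F(x)+t(-x+\vec a)$ with $\vec a$ an interior point, argues that $H$ cannot vanish on $\partial\mathcal{G}$ for any $t$ (the case $t\in(0,1)$ being excluded precisely by the sign conditions on the faces), and then invokes homotopy invariance of the degree together with the Kronecker existence theorem. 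In effect the paper re-runs the standard proof of Miranda's theorem on the shifted cube, while you exploit the fact that the shifted statement is equivalent to the centered one under an isometry. Your reduction is shorter and avoids any further appeal to degree theory; the paper's version is self-contained and does not lean on Lemma~\ref{lm2.2} at all, but its boundary-nonvanishing step for intermediate $t$ is exactly the part your change of variables lets you skip.
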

\begin{proof}
Consider the homotopy
$$
H: \overline{\mathcal{G}}\times[0,1]\subseteq \R^{N+1} \rightarrow \R^{N}
$$
by $H(x,t)=(1-t)F(x)+t(-x+\vec{a})$ for $\vec{a}=(a_1, a_2, \dots,a_N )$ with $r<a_i<R$, $1\le i\le N$. Then $H(x,t)\neq \theta$ for $x\in \partial{\mathcal{G}}$ and $t\in [0,1].$ In fact, $H(x,0)=F(x)\neq \theta$ since $\theta \notin F(\partial{\mathcal{G}})$, while $H(x,1)=-x+\vec{a} \neq \theta$ since $r<a_i<R$, $1\le i\le N$. Finally, we have $H(x,t)=\theta$ for some $t\in (0,1)$ which in turn implies that $F(x)+t(1-t)^{-1}(-x+a)=\theta$. But this contradicts the conditions of $f_i$, $i=1,\cdots,N$, given by the lemma. Therefore, by the homotopy invariant theorem of the degree theory, it follows that
$$
\mathrm{deg}[F,\mathcal{G},\theta]=
\mathrm{deg}[H(\cdot,0),\mathcal{G},\theta]=
\mathrm{deg}[H(\cdot,1),\mathcal{G},\theta],
$$
where $\mathrm{deg}[F,\mathcal{G},\theta]$ denotes the topological degree of $F$ at $\theta$ related to $\mathcal{G}$.
Hence $|\mathrm{deg}[F,\mathcal{G},\theta]|=1 \neq 0$ and the result follows by the Kronecker existence theorem.
\end{proof}

\section{\label{sec3} Minimizer of constraint minimization problem}
We consider a constraint minimization problem on $\mathcal{N}^b_{nod}$ (defined in \eqref{eqs1.18})
to seek a critical point of $I_b$ in this section. We begin this section by showing that the set $\mathcal{N}^b_{nod}$ is nonempty. Since the nonlocal term $\int_{\R^{N}}|(-\Delta)^{s/2}u|^2 dx (-\Delta)^{s} u$ is much more complicated than $\int_{\R^{N}}|\nabla u|^2 dx \Delta u$, the method in \cite{S} is no longer applicable here. Therefore, we take a different route, namely, we make use of Miranda theorem.

As a start, we define
$$
A^+ (u):
=\int_{\R^{N}}|(-\Delta)^{s/2}(u^{+})|^2dx,
\quad
A^- (u):
=\int_{\R^{N}}|(-\Delta)^{s/2}(u^{-})|^2dx,
$$
$$
B^+ (u):
=\int_{\R^{N}}V(x)(u^{+})^2 dx,
\quad
B^- (u):
=\int_{\R^{N}}V(x)(u^{-})^2 dx,
$$
$$
C(u):
=\int_{\R^{N}}
(-\Delta)^{s/2}(u^{+})(-\Delta)^{s/2}(u^{-})
dx.
$$

\begin{lemma}\label{lm3.1}
	Assume that $(f_1)-(f_4)$ hold. Let $u \in H$ with $u^{\pm}\neq 0$, then there is a unique pair $(\alpha_u,\beta_u)$ of positive numbers such that
	$\alpha_u u^+ + \beta_u u^- \in \mathcal{N}^b_{nod}$.
\end{lemma}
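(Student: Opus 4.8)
The plan is to fix $u\in H$ with $u^{\pm}\neq 0$ and study the $C^1$ map $\Phi:(0,\infty)\times(0,\infty)\to\R^2$ whose components are
$$
\Phi_1(\alpha,\beta)=\langle I_b'(\alpha u^++\beta u^-),\alpha u^+\rangle,\qquad
\Phi_2(\alpha,\beta)=\langle I_b'(\alpha u^++\beta u^-),\beta u^-\rangle.
$$
A pair $(\alpha_u,\beta_u)$ of positive numbers gives $\alpha_u u^++\beta_u u^-\in\mathcal N^b_{nod}$ precisely when $\Phi(\alpha_u,\beta_u)=(0,0)$. Using the notation $A^\pm(u),B^\pm(u),C(u)$ introduced above and the orthogonality-failure identity $C(u)>0$ (since $\langle u^+,u^-\rangle_{H^s(\R^N)}>0$ when $u^\pm\neq0$), one computes explicitly
$$
\Phi_1(\alpha,\beta)=a\alpha^2A^+ + b\big(\alpha^2A^++\beta^2A^-+2\alpha\beta C\big)\alpha^2A^+
+\alpha^2B^+ -\int_{\R^N}f(x,\alpha u^+)\alpha u^+\,dx,
$$
and symmetrically for $\Phi_2$ with $+\leftrightarrow-$. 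The existence of $(\alpha_u,\beta_u)$ will follow from the variant of Miranda's theorem (Lemma~\ref{lm2.3}) after a logarithmic change of variables $x_i=\log\alpha$, $x_2=\log\beta$ (or directly by choosing an annular box $\mathcal G=(r,R)^2$), and uniqueness will follow from the monotonicity built into $(f_4)$.

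The key steps, in order, are as follows. First, verify the boundary sign conditions of Lemma~\ref{lm2.3} on a box $(r,R)^2$ with $r$ small and $R$ large. For $\alpha$ small: the terms $a\alpha^2A^+$, $b\alpha^4(A^+)^2$, $2b\alpha^3\beta A^+C$, $\alpha^2 B^+$ are all of order at least $\alpha^2$, while by $(f_1)$--$(f_2)$ the term $\int f(x,\alpha u^+)\alpha u^+\,dx=o(\alpha^2)$ (using $f(x,s)=o(|s|^3)$ near $0$ together with the subcritical growth and Lemma~\ref{lm2.1}); hence $\Phi_1(r,\beta)>0$ for $\beta\in[r,R]$ once $r$ is small enough, uniformly in $\beta$ — one must be slightly careful that the uniformity in $\beta$ holds, which it does because the $\beta$-dependent term $2b\alpha^3\beta A^+C\ge0$ only helps. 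For $\alpha$ large: by $(f_3)$, $F(x,s)/s^4\to+\infty$, and a standard consequence is $f(x,s)s/s^4\to+\infty$ (or at least that $\int f(x,\alpha u^+)\alpha u^+$ dominates $\alpha^4$); since every other term in $\Phi_1(\alpha,\beta)$ grows at most like $\alpha^4$ (the worst being $b\alpha^4(A^+)^2$ and $2b\alpha^3\beta A^+C\le 2b\alpha^4 A^+ C$ for $\beta\le R\le\alpha$ after enlarging $R$), we get $\Phi_1(R,\beta)<0$ for all $\beta\in[r,R]$ once $R$ is large; again uniformity in $\beta$ needs the cross term handled, which is why one takes $R$ large enough that $\beta\le R$ forces the negative nonlinear term to win. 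The same estimates with roles swapped give the sign conditions for $\Phi_2$ on the faces $\beta=r$ and $\beta=R$. Also one checks $\Phi\neq(0,0)$ on $\partial\mathcal G$ (which the strict inequalities above already ensure on the four faces). Then Lemma~\ref{lm2.3} yields a zero $(\alpha_u,\beta_u)\in\mathcal G$, proving $\mathcal N^b_{nod}\neq\varnothing$ and the existence half of the lemma.

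Second, prove uniqueness. Suppose $(\alpha_1,\beta_1)$ and $(\alpha_2,\beta_2)$ both work; WLOG $0<\alpha_1\le\alpha_2$ and consider the two cases according to the ordering of $\beta_1,\beta_2$. The standard trick is: from $\alpha_i u^++\beta_i u^-\in\mathcal N^b_{nod}$, divide the $u^+$-equation by $\alpha_i^2$ (resp. $\alpha_i^4$ in the nonlocal term) and subtract; using that $C(u)>0$ the nonlocal contribution has a definite sign, and the term $\int[f(x,\alpha_1u^+)/(\alpha_1^3 (u^+)^3)-f(x,\alpha_2 u^+)/(\alpha_2^3(u^+)^3)](u^+)^4$ has a definite sign by $(f_4)$ (monotonicity of $f(s)/|s|^3$); combining forces $\alpha_1=\alpha_2$, then symmetrically $\beta_1=\beta_2$. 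One has to run this argument carefully because the nonlocal cross term $2b\alpha\beta C$ couples the two equations; the clean way is to assume, say, $\alpha_1\le\alpha_2$ and $\beta_1\le\beta_2$ (the mixed case $\alpha_1\le\alpha_2,\beta_1\ge\beta_2$ is handled by testing appropriately so that the cross terms still line up in sign), evaluate the first equation of the pair at $(\alpha_1,\beta_1)$ against the inequality obtained by plugging $(\alpha_1,\beta_1)$ into the functional identity valid at $(\alpha_2,\beta_2)$, and derive a contradiction unless equalities hold.

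I expect the main obstacle to be the uniqueness part, specifically controlling the nonlocal coupling term $2b\,\alpha\beta\,C(u)$ in the two Nehari identities simultaneously: unlike the $b=0$ case the two scalar equations do not decouple, so one cannot simply minimize in $\alpha$ and $\beta$ separately. The resolution is to exploit that $C(u)>0$ has a fixed sign and that $(f_4)$ gives strict monotonicity, so that in each of the sign-configurations of $(\alpha_1/\alpha_2,\beta_1/\beta_2)$ every disagreeing term contributes with the same sign, yielding a contradiction. A secondary (but routine) technical point is establishing the growth/decay estimates on $\int f(x,\alpha u^\pm)\alpha u^\pm\,dx$ uniformly for the transverse variable ranging over a compact interval, which is where hypotheses $(f_1)$, $(f_2)$ (for the small-parameter bound) and $(f_3)$ (for the large-parameter bound), together with the embedding in Lemma~\ref{lm2.1}, are used.
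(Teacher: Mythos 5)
Your proposal takes essentially the same route as the paper: the vector field $W(\alpha,\beta)=\bigl(\langle I_b'(\alpha u^++\beta u^-),\alpha u^+\rangle,\langle I_b'(\alpha u^++\beta u^-),\beta u^-\rangle\bigr)$, the boundary sign analysis on a box $[r,R]^2$ using $(f_1)$--$(f_2)$ for the small face and $(f_3)$--$(f_4)$ for the large face, the Miranda variant (Lemma~\ref{lm2.3}) for existence, and the monotonicity in $(f_4)$ for uniqueness. Two caveats. First, a computational slip: since $\int_{\R^N}(-\Delta)^{s/2}(\alpha u^++\beta u^-)(-\Delta)^{s/2}(\alpha u^+)\,dx=\alpha^2A^+(u)+\alpha\beta C(u)$, your $\Phi_1$ should contain $a\bigl(\alpha^2A^++\alpha\beta C\bigr)$ and the second factor of the $b$-product should be $\alpha^2A^++\alpha\beta C$, not $\alpha^2A^+$ (compare the paper's \eqref{eqs3.1}). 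This does not break the boundary analysis, because $C(u)>0$ makes every omitted term nonnegative (so they only help on the $\alpha=r$ face) and of degree at most four in $(\alpha,\beta)$ (so they are still dominated by the superquartic nonlinear term on the $\alpha=R$ face), but the formula as written is wrong.

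Second, and more substantively, your uniqueness argument has a genuine gap at the mixed case $\alpha_1\le\alpha_2$, $\beta_1\ge\beta_2$: "handled by testing appropriately so that the cross terms still line up in sign" is not an argument, and a direct componentwise comparison of two arbitrary pairs does run into sign trouble there because the coupling terms $\alpha\beta C$, $\alpha\beta^3C$, $\alpha^3\beta C$ move in opposite directions. The paper avoids this case entirely by a two-step normalization. It first reduces to the situation where one of the competing combinations is $u$ itself: if $v=\alpha_1u^++\beta_1u^-$ and $v'=\alpha_2u^++\beta_2u^-$ both lie in $\mathcal N^b_{nod}$, then $v'=(\alpha_2/\alpha_1)v^++(\beta_2/\beta_1)v^-$, so it suffices to treat $u\in\mathcal N^b_{nod}$ and show the only admissible pair is $(1,1)$. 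Then, crucially, it orders the two components of the single unknown pair, say $\tilde\alpha\le\tilde\beta$ (not the components of one pair against those of another): replacing $\tilde\beta$ by $\tilde\alpha$ in the $u^+$-identity only decreases the left-hand side (all replaced terms carry a factor $C(u)>0$ or $A^-(u)\ge0$), and dividing by $\tilde\alpha^4$ and subtracting the identity at $(1,1)$ yields $(\tilde\alpha^{-2}-1)\bigl(aA^++aC+B^+\bigr)\le\int\bigl(f(x,\tilde\alpha u^+)/(\tilde\alpha u^+)^3-f(x,u^+)/(u^+)^3\bigr)(u^+)^4\,dx$, whence $\tilde\alpha\ge1$ by $(f_4)$; the symmetric manipulation on the $u^-$-identity gives $\tilde\beta\le1$, and $\tilde\alpha\le\tilde\beta$ sandwiches both to $1$. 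You should incorporate this normalization-plus-internal-ordering device; without it the uniqueness half of your proof is incomplete.
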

\begin{proof}
Fix an $u\in H$ with $u^{\pm}\neq 0$.
We first establish the existence of $\alpha_u$ and $\beta_u$. Consider the vector field
\begin{equation*}
W(\alpha,\beta)=
\big(\langle I'_b(\alpha u^+ + \beta u^-),\alpha u^+ \rangle,
\langle I'_b(\alpha u^+ + \beta u^-),\beta u^-\rangle
\big)
\end{equation*}
for $\alpha,\beta>0$, where
\begin{align*}
\langle
I'_b(\alpha &u^+ + \beta u^-),\alpha u^+ \rangle
\\
&=\ds a \int_{\R^{N}}(-\Delta)^{s/2}(\alpha u^+ + \beta u^-) (-\Delta)^{s/2}(\alpha u^+)dx+\int_{\R^{N}}V(x)(\alpha u^{+})^2 dx\\
&\quad+b \int_{\R^{N}}|(-\Delta)^{s/2}(\alpha u^+ + \beta u^-)|^2dx\int_{\R^{N}}(-\Delta)^{s/2}(\alpha u^+ + \beta u^-) (-\Delta)^{s/2}(\alpha u^+)dx
\\
&\quad -\int_{\R^{N}}f(x,\alpha u^+)\alpha u^+ dx\ ,
\end{align*}
and
\begin{align*}
\langle
I'_b(\alpha &u^+ + \beta u^-),\beta u^- \rangle
\\
&\ds= a \int_{\R^{N}}(-\Delta)^{s/2}(\alpha u^+ + \beta u^-) (-\Delta)^{s/2}(\beta u^-)dx+\int_{\R^{N}}V(x)(\beta u^-)^2 dx\\
&\quad+b \int_{\R^{N}}|(-\Delta)^{s/2}(\alpha u^+ + \beta u^-)|^2dx\int_{\R^{N}}(-\Delta)^{s/2}(\alpha u^+ + \beta u^-) (-\Delta)^{s/2}(\beta u^-)dx \\
&\quad-\int_{\R^{N}}f(x,\beta u^-)\beta u^- dx.
\end{align*}
By a straightforward computation, we get
\begin{align}
\langle I'_b(\alpha u^+ + \beta u^-),\alpha u^+ \rangle
&=b(\alpha^2 A^+ (u)+2\alpha\beta C(u)+\beta^2 A^- (u))(\alpha^2 A^+ (u)+\alpha\beta C(u))\notag\\
&\quad+a(\alpha^2 A^+ (u)+\alpha\beta C(u))
+\alpha^2 B^+ (u)
-\ds\int_{\R^{N}}f(x,\alpha u^{+})\alpha u^{+}dx\notag\\
&=\alpha\beta a C(u) +
\alpha \beta^3 b A^- (u)C(u)
+\alpha^2 a A^+ (u)\notag\\
&\quad+2\alpha^{2} \beta^2 b C^2(u)
+\alpha^2 \beta^2 b A^+ (u)A^- (u)
+\alpha^2 B^+ (u)\notag\\
&\ds\quad+3\alpha^{3} \beta b A^+ (u)C(u)
+\alpha^{4} b (A^+ (u))^2 -
\int_{\R^{N}}f(x,\alpha u^{+})\alpha u^{+}dx\ ,\label{eqs3.1}
\end{align}
and
\begin{align}
\langle I'_b(\alpha u^+ + \beta u^-),\beta u^- \rangle
&=b(\beta^2 A^- (u)+2\beta\alpha C(u)+\alpha^2 A^+ (u))(\beta^2 A^- (u)+\beta\alpha C(u))\notag\\
&\quad+a(\beta^2 A^- (u)+\beta\alpha C(u))
+\beta^2 B^- (u)
-\ds\int_{\R^{N}}f(x,\beta u^-)\beta u^- dx\notag\\
&=\alpha\beta a C(u)
+  \alpha^{3}\beta b A^+ (u)C(u)
+ \beta ^2 a A^- (u)\notag\\
&\quad+2  \alpha^2\beta^2  b C^2(u)
+   \alpha^2\beta^2 b A^- (u)A^+ (u)
+\beta^2 B^- (u)\notag\\
&\quad+3 \alpha\beta^3 b A^- (u) C(u)
+\beta^{4}b(A^- (u))^2
-\int_{\R^{N}}f(x,\beta u^-)\beta u^- dx.\label{eqs3.2}
\end{align}
We will show that
there exists $r\in(0,R)$ such that
\begin{equation}\label{eqs3.3}
\langle I'_b(r u^+ + \beta u^-),r u^+ \rangle
>0,
\quad\langle I'_b(\alpha u^+ + r u^-),r u^- \rangle
>0,
\ \ \forall \alpha,\beta\in[r,R],
\end{equation}
and
\begin{equation}\label{eqs3.4}
\langle I'_b(R u^+ + \beta u^-),R u^+ \rangle
<0,
\quad\langle I'_b(\alpha u^+ + R u^-),R u^- \rangle
<0,
\ \ \forall\alpha,\beta\in[r,R].
\end{equation}
Indeed, it follows from the assumption $(f_1)$ that for any $\varepsilon >0$, there exists a positive constant $C_\varepsilon$ such that
\begin{equation}\label{eqs3.5}
\int_{\R^{N}}f(x,\alpha u^{+})\alpha u^{+}dx
\le
\varepsilon\int_{\R^{N}}|\alpha u^{+}|^{2}dx + C_\varepsilon \int_{\R^{N}}|\alpha u^{+}|^{2^{*}_s}dx.
\end{equation}
Choosing $\varepsilon=\frac{1}{2}aA^+ (u)$,
together with (\ref{eqs3.1}) and (\ref{eqs3.5}), we deduce that
\begin{align}
\langle I'_b(\alpha u^+ + \beta u^-),\alpha u^+ \rangle
&\ds\ge\alpha\beta a C(u) +
\alpha \beta^3 b A^- (u)C(u)
+2\alpha^{2} \beta^2 b C^2(u)\notag\\
&\quad
+\alpha^2 \beta^2 b A^+ (u)A^- (u)
+\alpha^2 B(u)+\alpha^{4} b (A^+ (u))^2\notag\\
&\ds\quad+3\alpha^{3} \beta b A^+ (u)C(u)
+\frac{\alpha^2 a}{2}(A^+ (u))^2
- C_1 \int_{\R^{N}}|\alpha u^{+}|^{2^{*}_s}dx,\label{eqs3.6}
\end{align}
where $C_1$ is a positive constant. On the other hand,
since $u^{+} \neq 0$, there exists a constant $\delta>0$ such that $meas\{x\in\R^{N}:u^{+}(x)>\delta\}>$0. In addition, by $(f_3)$ and $(f_4)$, we conclude that for any $L>0$, there exists $T>0$ such that $f(x,s)/s^3>L$ for all $s>T$.
Therefore, for $\alpha > T/\delta$, we have
\begin{equation}\label{eqs3.7}
\int_{\R^{N}}f(x,\alpha u^{+})\alpha u^{+}dx
\ge
\int_{\{u^{+}(x)>\delta\}}
\frac{f(x,\alpha u^{+})}{(\alpha u^{+})^3}(\alpha u^{+})^4
\ge L \alpha^4 \int_{\{u^{+}(x)>\delta\}}
(u^+)^4dx.
\end{equation}
Choose $L$ sufficiently large 
so that $L\int_{\{u^{+}(x)>\delta\}}(u^+)^4dx
>2b(A^+ (u))$. By (\ref{eqs3.1}) and (\ref{eqs3.7}),
we have that
\begin{align}
\langle I'_b(\alpha u^+ + \beta u^-),\alpha u^+ \rangle
&\le\alpha\beta a C(u) +
\alpha \beta^3 b A^- (u)C(u)
+\alpha^2 a A^+ (u)\notag\\
&\quad+2\alpha^{2} \beta^2 b C^2(u)
+\alpha^2 \beta^2 b A^+ (u)A^- (u)
+\alpha^2 B^+ (u)\notag\\
&\ds\quad+3\alpha^{3} \beta b A^+ (u)C(u)
-\alpha^{4} b (A^+ (u))^2 .\label{eqs3.8}
\end{align}
Similarly, we can obtain
\begin{align}
\langle I'_b(\alpha u^+ + \beta u^-),\beta u^- \rangle
&\ge \alpha\beta a C(u)
+  \alpha^{3}\beta b A^+ (u)C(u)
+2  \alpha^2\beta^2  b C^2(u)
\notag\\
&\quad
+\alpha^2\beta^2 b A^- (u)A^+ (u)
+\beta^2 B^- (u)
+\beta^{4}b(A^- (u))^2\notag\\
&\quad+3 \alpha\beta^3 b A^- (u) C(u)
+\frac{\beta^2 b}{2}(A^- (u))^2
-C_2 \int_{\R^N}|\beta u^-|^{2^{*}_s}dx,
\label{eqs3.9}
\end{align}
 and
\begin{align}
\langle I'_b(\alpha u^+ + \beta u^-),\beta u^- \rangle
&\le \alpha\beta a C(u)
+  \alpha^{3}\beta b A^+ (u)C(u)
+\beta^2 a A^- (u)
\notag\\
&\quad
+2  \alpha^2\beta^2  b C^2(u)
+\alpha^2\beta^2 b A^- (u)A^+ (u)
+\beta^2 B^- (u)
\notag\\
&\quad+3 \alpha\beta^3 b A^- (u) C(u)
-\beta^4 b(A^- (u))^2\ ,\label{eqs3.10}
\end{align}
where $C_2$ is a positive constant.
Hence, in view of (\ref{eqs3.6}) and (\ref{eqs3.8})-(\ref{eqs3.10}), we have that there exists $r\in(0,R)$ such that
(\ref{eqs3.3}) and (\ref{eqs3.4}) hold.
It follows, in view of Lemma \ref{lm2.3}, that there exists $(\alpha_u,\beta_u)\in [r,R]\times[r,R]$ which satisfies $W(\alpha_u,\beta_u)=(0,0)$, i.e.,
$\alpha_u u^+ +\beta_u u^-\in \mathcal{N}^b_{nod}. $

It remains to establish the uniqueness of the pair ($\alpha_u,\beta_u$) and we need to consider two cases.

{\bf Case 1.\ $u\in \mathcal{N}^b_{nod}.$}

If $u\in \mathcal{N}^b_{nod}$, then $u^{+}+u^-=u\in \mathcal{N}^b_{nod}$. This in turn implies that
\begin{equation}\label{eqs3.11}
a(A^+ (u)+C(u))+b(A^+ (u)+2C(u)+A^- (u))(A^+ (u)+C(u))
+B^+ (u)=\int_{\R^{N}}f(x,u^+)u^+dx\ ,
\end{equation}
and
\begin{equation}\label{eqs3.12}
a(A^-(u)+C(u))+b(A^- (u)+2C(u)+A^+ (u))(A^- (u)+C(u))
+B^- (u)=\int_{\R^{N}}f(x,u^-)u^-dx.
\end{equation}
We claim that $(\alpha_u,\beta_u)=(1,1)$ is the unique pair of numbers such that $\alpha_u u^+ +\beta_u u^-\in \mathcal{N}^b_{nod}.$

In fact, let $(\tilde{\alpha}_u,\tilde{\beta}_u)$ be a pair of numbers such that $\tilde{\alpha}_u u^+ + \tilde{\beta}_u u^-\in \mathcal{N}^b_{nod}$. By (\ref{eqs3.1}) and (\ref{eqs3.2}), we have
\begin{align}
b(\tilde{\alpha}_u ^2 A^+ (u)+2\tilde{\alpha}_u \tilde{\beta}_u C(u)+\tilde{\beta}_u^2 &A^- (u))(\tilde{\alpha}_u^2 A^+ (u)+\tilde{\alpha}_u\tilde{\beta}_u C(u))
+\tilde{\alpha}_u^2 B(u) \notag\\
+a&(\tilde{\alpha}_u^2 A^+ (u)+\tilde{\alpha}_u\tilde{\beta}_u C(u))
=\ds\int_{\R^{N}}f(x,\tilde{\alpha}_u u^{+})\tilde{\alpha}_u u^{+}dx\ , \label{eqs3.13}
\end{align}
and
\begin{align}
b(\tilde{\beta}_u^2 A^- (u)+2\tilde{\beta}_u\tilde{\alpha}_u C(u)+\tilde{\alpha}_u^2 &A^+ (u))(\tilde{\beta}_u A^- (u)+\tilde{\beta}_u\tilde{\alpha}_u C(u))
+\tilde{\beta}_u^2 B^- (u) \notag\\
+a&(\tilde{\beta}_u^2 A^- (u)+\tilde{\beta}_u\tilde{\alpha}_u C(u))
=\ds\int_{\R^{N}}f(x,\tilde{\beta}_u u^{-})\tilde{\beta}_u u^{-}dx. \label{eqs3.14}
\end{align}
Without loss of generality, we may assume that $0<\tilde{\alpha}_u\le \tilde{\beta}_u$. Then (\ref{eqs3.13}) leads to that
\begin{align}
b\tilde{\alpha}_u^4 (A^+ (u)+2 C(u)+ A^- (u))( A^+ (u)+C(u))
+\tilde{\alpha}_u^2 (aA^+ (u)+ a&C(u)+B^+ (u))\notag\\
\le\ds\int_{\R^{N}}&f(x,\tilde{\alpha}_u u^{+})\tilde{\alpha}_u u^{+}dx. \label{eqs3.15}
\end{align}
From (\ref{eqs3.11}) and (\ref{eqs3.15}) we see that
\begin{equation}\label{eqs3.16}
(\tilde{\alpha}_u{-2}-1)(aA^+ (u)+aC(u)+B^+(u))
\le
\int_{\R^{N}} \Big(\frac{f(x,\tilde{\alpha}_u u^{+})}{(\tilde{\alpha}_u u^{+})^{3}}
-\frac{f(x,u^{+})}{(u^{+})^{3}}
\Big)(u^{+})^4 dx.
\end{equation}
By $(f_{4})$ together with (\ref{eqs3.16}), it implies that
$1\le\ \tilde{\alpha}_u \le \tilde{\beta_u}$. Using the same method, we can get $\tilde{\beta}_u \le 1$ by (\ref{eqs3.12}) and (\ref{eqs3.14}), which implies
$\tilde{\alpha}_u = \tilde{\beta}_u=1$.

{\bf Case 2.\ $u\notin \mathcal{N}^b_{nod}.$}

If $u \notin \mathcal{N}^b_{nod}$, there exists a pair $(\alpha_u,\beta_u)$ of positive numbers such that $\alpha_u u^+ + \beta_u u^-\in \mathcal{N}^b_{nod}$. Suppose that there exists another pair $(\alpha'_u,\beta'_u)$ of positive numbers such that $\alpha'_u u^+ + \beta_u' u^-\in \mathcal{N}^b_{nod}$.
Set $v:=\alpha_u u^+ + \beta_u u^-$ and
$v':=\alpha'_u u^+ + \beta'_u u^-$, we have
\begin{equation*}
\ds \frac{\alpha'_u}{\alpha_u} v^{+}
+\frac{\beta'_u}{\beta_u} v^{-}=
\alpha'_u u^{+}+\beta'_u u^{-}=
v' \in \mathcal{N}^b_{nod}.
\end{equation*}
Noticing that $v \in \mathcal{N}^b_{nod}$, we obtain that $\alpha'_u=\alpha_u$ and
$\beta'_u=\beta_u$, which implies that  $(\alpha_u,\beta_u)$ is the unique pair of numbers such that $\alpha_u u^+ +\beta_u u^-\in \mathcal{N}^b_{nod}$. Hence we finish our proof. 
\end{proof}

\begin{lemma}\label{lm3.2}
Assume $(f_1)-(f_4)$ hold, and $u\in H$ such that $\langle I'_b(u),u^{+} \rangle \le 0$ and $\langle I'_b(u),u^{-} \rangle \le 0$. Then the unique pair $(\alpha_u,\beta_u)$ obtained in Lemma \ref{lm3.1} satisfies $0<\alpha_u,\beta_u \le 1$.
\end{lemma}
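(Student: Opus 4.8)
The plan is to argue by contradiction, in the spirit of the uniqueness computation in Case~1 of Lemma~\ref{lm3.1} but carrying inequalities rather than equalities. First I would reduce to a single case: by the symmetry between $u^+$ and $u^-$, I may assume without loss of generality that $0<\beta_u\le\alpha_u$, and it then suffices to prove $\alpha_u\le1$, since $\beta_u\le\alpha_u\le1$ follows at once; the case $\alpha_u\le\beta_u$ is handled identically after interchanging $u^+$ and $u^-$ and using $\langle I'_b(u),u^-\rangle\le0$ together with (\ref{eqs3.2}) in place of $\langle I'_b(u),u^+\rangle\le0$ and (\ref{eqs3.1}). So I would assume, towards a contradiction, that $\alpha_u>1$.

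Next I would extract two competing inequalities. Since $\alpha_u u^++\beta_u u^-\in\mathcal{N}^b_{nod}$, identity (\ref{eqs3.1}) evaluated at $(\alpha_u,\beta_u)$ expresses $\int_{\R^N}f(x,\alpha_u u^+)\alpha_u u^+\,dx$ as a term homogeneous of degree $4$ in $(\alpha_u,\beta_u)$ carrying the factor $b$, plus the degree-$2$ term $a(\alpha_u^2A^+(u)+\alpha_u\beta_u C(u))+\alpha_u^2B^+(u)$. Using $0<\beta_u\le\alpha_u$ and $C(u)\ge0$ (valid because $u^+\ge0\ge u^-$ have disjoint supports), each factor of the degree-$4$ term is bounded above by its value at $\beta_u=\alpha_u$, so that, after dividing by $\alpha_u^4$,
\[
\int_{\{u^+>0\}}\frac{f(x,\alpha_u u^+)}{(\alpha_u u^+)^3}(u^+)^4\,dx\le b\,\big(A^+(u)+2C(u)+A^-(u)\big)\big(A^+(u)+C(u)\big)+\frac{1}{\alpha_u^2}\big(a(A^+(u)+C(u))+B^+(u)\big).
\]
On the other hand, writing out $\langle I'_b(u),u^+\rangle$ --- which is (\ref{eqs3.1}) at $\alpha=\beta=1$ --- and invoking the hypothesis $\langle I'_b(u),u^+\rangle\le0$ gives
\[
\int_{\{u^+>0\}}\frac{f(x,u^+)}{(u^+)^3}(u^+)^4\,dx\ge b\,\big(A^+(u)+2C(u)+A^-(u)\big)\big(A^+(u)+C(u)\big)+a(A^+(u)+C(u))+B^+(u).
\]
Finally, since $\alpha_u>1$, condition $(f_4)$ gives $\frac{f(x,\alpha_u u^+)}{(\alpha_u u^+)^3}\ge\frac{f(x,u^+)}{(u^+)^3}$ a.e.\ on $\{u^+>0\}$, so the left-hand side of the first display is at least that of the second.

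Chaining the three relations and cancelling the common $b$-term leaves $a(A^+(u)+C(u))+B^+(u)\le\alpha_u^{-2}\big(a(A^+(u)+C(u))+B^+(u)\big)$; since $a>0$, $A^+(u)>0$, $C(u)\ge0$ and $B^+(u)>0$ make the bracket strictly positive, this forces $\alpha_u^2\le1$, contradicting $\alpha_u>1$. Hence $\alpha_u\le1$, and therefore $\beta_u\le1$; together with $\alpha_u,\beta_u>0$ from Lemma~\ref{lm3.1}, this gives the assertion. I expect the main difficulty to be the bookkeeping around the nonlocal Kirchhoff term: one must notice that this term is homogeneous of degree $4$ in $(\alpha,\beta)$, so that after dividing by $\alpha_u^4$ and using $\beta_u\le\alpha_u$ it is dominated by \emph{exactly} the Kirchhoff contribution already present in $\langle I'_b(u),u^+\rangle$, allowing the cancellation that leaves only the strictly positive quadratic term; arranging for every inequality to point consistently --- which is where $(f_4)$ and the sign $C(u)\ge0$ enter --- is the delicate part.
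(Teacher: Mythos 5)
Your proof is correct and follows essentially the same route as the paper: assume WLOG $\beta_u\le\alpha_u$, use $C(u)\ge 0$ to bound the degree-four Kirchhoff term after normalizing by $\alpha_u^4$, compare with the inequality coming from $\langle I'_b(u),u^+\rangle\le 0$, and invoke $(f_4)$ to force $\alpha_u\le 1$ (this is exactly the chain (\ref{eqs3.17})--(\ref{eqs3.19}) in the paper). Your write-up is in fact cleaner: the paper's displayed inequality (\ref{eqs3.19}) has its direction reversed by a typo, whereas your version of the chained inequalities points the right way.
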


\begin{proof}
Without loss of generality, we may assume that $\alpha_u \ge \beta_u >0$. Since $\alpha_u u^{+} + \beta_u u^{-}
\in \mathcal{N}^b_{nod}$, we have
\begin{align}
b\alpha_u^4 (A^+ (u)+2 C(u)+& A^- (u))( A^+ (u)+C(u))
+ \alpha_u^2 (aA^+ (u)+ aC(u)+B^+ (u))\notag\\
\ge&b(\alpha_u^2 A^+ (u)+2\alpha_u\beta_u C(u)+\beta_u^2 A^- (u))(\alpha_u^2 A^+ (u)+\alpha_u\beta_u C(u))\notag\\
&+(a\alpha_u^2 A^+ (u)+a\alpha_u\beta_u C(u)+\alpha_u B^+ (u)) \notag \\
=&\ds\int_{\R^{N}}f(x,\alpha_0 u^{+})\alpha_0 u^{+}dx. \label{eqs3.17}
\end{align}
Sine $\langle I'_b(u),u^{-} \rangle \le 0$, it yields that
\begin{equation}\label{eqs3.18}
b(A^+ (u)+2C(u)+A^- (u))(A^+ (u)+C(u))+a(A^+ (u)+C(u))
+B^+ (u) \le \int_{\R^{N}}f(u^+)u^+dx.
\end{equation}
Then, it follows from (\ref{eqs3.17}) and (\ref{eqs3.18}) that
\begin{equation}\label{eqs3.19}
(\alpha_u^{-2}-1)(aA^+ (u)+aC(u)+B^+ (u))
\le
\int_{\R^{N}} \Big(\frac{f(\alpha_u u^{+})}{(\alpha_u u^{+})^{3}}
-\frac{f(u^{+})}{(u^{+})^{3}}
\Big)(u^{+})^4 dx.
\end{equation}
If $\alpha_u>1$, the left side of (\ref{eqs3.19}) is negative while, from $(f_4)$, the right side of \eqref{eqs3.19} is positive. This implies that $\alpha_u \le 1$, which completes the proof.
\end{proof}

\begin{lemma}\label{lm3.3}
For any fixed $u\in H$ with $u^ {\pm} \neq 0$, $(\alpha_u,\beta_u)$ is the unique maximum point of the function $\phi:({\R_+}\times {\R_+})\rightarrow \R$, where $(\alpha_u,\beta_u)$ is obtained in Lemma \ref{lm3.1}, and $\phi (\alpha,\beta):=I_b({\alpha u^+ +\beta u^-})$.
\end{lemma}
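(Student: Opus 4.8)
The plan is to show that $\phi$ attains a global maximum over $\R_+\times\R_+$ at an \emph{interior} point, and then to identify that point with $(\alpha_u,\beta_u)$ via the uniqueness part of Lemma~\ref{lm3.1}. Write $Q(\alpha,\beta):=\|(-\Delta)^{s/2}(\alpha u^++\beta u^-)\|_{L^2(\R^N)}^2=\alpha^2A^+(u)+2\alpha\beta C(u)+\beta^2A^-(u)$, so that $\phi(\alpha,\beta)=\frac a2Q(\alpha,\beta)+\frac b4Q(\alpha,\beta)^2+\frac12\big(\alpha^2B^+(u)+\beta^2B^-(u)\big)-\int_{\R^N}F(x,\alpha u^+)\,dx-\int_{\R^N}F(x,\beta u^-)\,dx$. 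I shall use that $A^\pm(u)>0$, that $B^\pm(u)>0$ (since $V\ge V_0>0$ and $u^\pm\neq0$), and — crucially — that $C(u)>0$, which holds because $\langle u^+,u^-\rangle_{H^s(\R^N)}>0$ whenever $u^\pm\neq0$. Note also that $\phi$ extends continuously to $[0,\infty)\times[0,\infty)$ (as $I_b\in\mathcal C^1(H,\R)$), and that by the chain rule $\partial_\alpha\phi(\alpha,\beta)=\langle I'_b(\alpha u^++\beta u^-),u^+\rangle$ and $\partial_\beta\phi(\alpha,\beta)=\langle I'_b(\alpha u^++\beta u^-),u^-\rangle$.

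First I would establish coercivity: $\phi(\alpha,\beta)\to-\infty$ as $\alpha^2+\beta^2\to\infty$ with $\alpha,\beta\ge0$. Since $2\alpha\beta C(u)\le C(u)(\alpha^2+\beta^2)$, one has $Q(\alpha,\beta)\le\alpha^2(A^+(u)+C(u))+\beta^2(A^-(u)+C(u))$, whence $\frac b4Q(\alpha,\beta)^2\le C_1(\alpha^4+\beta^4)$; on the other hand, by exactly the superquadratic bookkeeping already used in Lemma~\ref{lm3.1} (see (\ref{eqs3.7})), assumptions $(f_3)$--$(f_4)$ give, for a suitable $\delta>0$ and arbitrarily large $L$, $\int_{\R^N}F(x,\alpha u^+)\,dx\ge L\alpha^4\int_{\{u^+>\delta\}}(u^+)^4\,dx$ for all large $\alpha$, and similarly for $u^-$. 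Choosing $L$ so large that these quartic terms outweigh $C_1(\alpha^4+\beta^4)$, and noting the surviving terms $\frac a2Q$ and $\frac12(\alpha^2B^+(u)+\beta^2B^-(u))$ are only quadratic in $(\alpha,\beta)$, we obtain $\phi(\alpha,\beta)\le-C_2(\alpha^4+\beta^4)+C_3(\alpha^2+\beta^2)\to-\infty$. Consequently $\phi$ attains a global maximum on $[0,\infty)^2$ at some $(\bar\alpha,\bar\beta)$.

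Next I would rule out the boundary. If $\bar\alpha=\bar\beta=0$, then $\phi(0,0)=0$, but $(f_1)$--$(f_2)$ give $\int_{\R^N}F(x,\alpha u^+)\,dx\le C\alpha^4$ for small $\alpha>0$, so $\phi(\alpha,0)\ge\frac12\alpha^2(aA^+(u)+B^+(u))-C\alpha^4>0$ for $\alpha$ small — a contradiction. If $\bar\alpha>0$ and $\bar\beta=0$, a short computation (using $f(x,0)=0$ and that $u^+,u^-$ have disjoint supports, so the $V$- and $f$-terms drop out) gives $\partial_\beta\phi(\bar\alpha,0)=\bar\alpha\,C(u)\big(a+b\bar\alpha^2A^+(u)\big)>0$, so $\phi(\bar\alpha,\cdot)$ is strictly increasing at $0$, again contradicting maximality; the case $\bar\alpha=0<\bar\beta$ is symmetric. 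Hence $\bar\alpha,\bar\beta>0$, so $\nabla\phi(\bar\alpha,\bar\beta)=0$, i.e. $\langle I'_b(\bar\alpha u^++\bar\beta u^-),u^\pm\rangle=0$; multiplying by $\bar\alpha$, resp. $\bar\beta$, and using $(\bar\alpha u^++\bar\beta u^-)^+=\bar\alpha u^+\neq0$ and $(\bar\alpha u^++\bar\beta u^-)^-=\bar\beta u^-\neq0$, we get $\bar\alpha u^++\bar\beta u^-\in\mathcal N^b_{nod}$. By the uniqueness in Lemma~\ref{lm3.1}, $(\bar\alpha,\bar\beta)=(\alpha_u,\beta_u)$. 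Finally, any global maximizer of $\phi$ is, by the above, an interior critical point, and the only interior critical point is $(\alpha_u,\beta_u)$; hence $(\alpha_u,\beta_u)$ is the unique maximum point of $\phi$.

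The step I expect to be most delicate is the coercivity estimate: one must control the quartic nonlocal term $\frac b4Q(\alpha,\beta)^2$ — including its mixed monomials $\alpha^3\beta$, $\alpha^2\beta^2$, $\alpha\beta^3$ — against the quartic lower bound on $\int F$ from $(f_3)$, uniformly over all directions in the first quadrant. The inequality $2\alpha\beta C(u)\le C(u)(\alpha^2+\beta^2)$ is what makes this tractable, since it decouples $\alpha$ and $\beta$ and reduces the matter to the one-variable computation already performed in Lemma~\ref{lm3.1}. The boundary analysis is then routine, the one point deserving care being the strict positivity $C(u)>0$ (equivalently $u^\pm\neq0$), which is precisely what forces the maximizer off the coordinate axes.
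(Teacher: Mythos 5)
Your proof is correct and follows essentially the same route as the paper: coercivity of $\phi$ from $(f_3)$, exclusion of boundary maxima, and identification of the interior maximizer with the unique critical point supplied by Lemma~\ref{lm3.1}. In fact your version fills in details the paper leaves implicit (the explicit decoupling bound on $Q^2$ for coercivity, and the computation $\partial_\beta\phi(\bar\alpha,0)=\bar\alpha C(u)\bigl(a+b\bar\alpha^2A^+(u)\bigr)>0$ using $C(u)>0$, where the paper only asserts monotonicity near the axes).
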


\begin{proof}
From the proof of Lemma \ref{lm3.1}, $(\alpha_u,\beta_u)$ is the unique critical point of $\phi$ in $(\R_+\times\R_+)$.
By $(f_{3})$, we conclude that $\phi (\alpha,\beta) \rightarrow -\infty$ uniformly as $|(\alpha,\beta)|\rightarrow \infty$. Therefore, it is sufficient to check that the maximum point cannot be achieved on the boundary of $(\R_+\times\R_+)$. We use the argument of contradiction. 
Suppose that $(0,\bar{\beta})$ is the global maximum point of $\phi$ with $\bar{\beta}\ge 0$.
By a direct calculation, we have
\begin{align}
\phi (\alpha,\bar{\beta})=&I_b(\alpha u^{+}+\bar{\beta}u^{-})\notag\\
=&\ds\frac{a}{2} \int_{\R^{N}} |(-\Delta)^{s/2}(\alpha u^+ + \bar{\beta} u^-)|^2dx+\int_{\R^{N}}V(x)(\alpha u^+ + \bar{\beta} u^-)^2 dx\notag\\
&\ds+\frac{b}{4}\Big(\int_{\R^{N}} |(-\Delta)^{s/2}(\alpha u^+ + \bar{\beta} u^-)|^2dx\Big)^2-
\int_{\R^{N}} F(x,\alpha u^+ +
\bar{\beta} u^-)dx, \notag
\end{align}
which implies that $\phi$ is an increasing function with respect to $\alpha$ if $\alpha$ is small enough.
This yields the contradiction. Similarly, $\phi$ can not achieve its global maximum on $(\alpha,0)$
for any $\alpha\ge 0.$ We finish the proof.
\end{proof}
Next we consider the following minimization problem
\begin{equation}\label{eqs3.20}
c_{nod}^{b}:=\inf\{I_b(u):u\in \mathcal{N}^b_{nod}\}.
\end{equation}
As $\mathcal{N}^b_{nod}$ is nonempty in $H$ by Lemma \ref{lm2.1}, we see that $c_{nod}^{b}$ is well defined.

\begin{lemma}\label{lm3.4}
Assume that $(f_1)-(f_4)$ hold, then $c_{nod}^{b}$ can be achieved.
\end{lemma}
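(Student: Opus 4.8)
The plan is to prove that the infimum $c_{nod}^b$ in \eqref{eqs3.20} is attained by a standard minimizing-sequence argument, using the compactness afforded by Lemma~\ref{lm2.1}(ii) together with the coercivity/splitting structure coming from $(f_1)$--$(f_4)$ and Lemmas~\ref{lm3.1}--\ref{lm3.3}. First I would establish that $c_{nod}^b>0$ by showing that every $u\in\mathcal N^b_{nod}$ is bounded away from $0$ in $H$. Indeed, from $\langle I'_b(u),u^\pm\rangle=0$ together with $(f_1)$ and $(f_2)$ (giving $\int f(x,u^\pm)u^\pm\le \varepsilon\|u^\pm\|_H^2+C_\varepsilon\|u^\pm\|_H^{p}$ via the Sobolev embedding \eqref{eqS2.1}), and keeping only the lower-order quadratic terms $a A^\pm(u)+B^\pm(u)$ on the left (discarding the nonnegative $b$-terms, which are nonnegative since $C(u)\ge0$), one gets $\|u^\pm\|_H\ge\rho_0$ for some $\rho_0>0$ independent of $u$. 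Then, since for $u\in\mathcal N^b_{nod}$ one has $u\in\mathcal N^b$ as well, a computation of $I_b(u)-\frac14\langle I'_b(u),u\rangle$ using $(f_4)$ (which gives $\frac14 f(x,s)s-F(x,s)\ge 0$ and in fact $\ge \frac14 f(x,s)s - F(x,s)$ controls things, while the $\frac{a}{4}\|\cdot\|$ term survives) yields $I_b(u)\ge \frac{a}{4}\|u\|_H^2\ge \frac{a}{4}(\rho_0^2+\rho_0^2)>0$, hence $c_{nod}^b>0$.

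Next I would take a minimizing sequence $\{u_n\}\subset\mathcal N^b_{nod}$, $I_b(u_n)\to c_{nod}^b$. The same identity $I_b(u_n)=I_b(u_n)-\frac14\langle I'_b(u_n),u_n\rangle\ge\frac{a}{4}\|u_n\|_H^2$ shows $\{u_n\}$ is bounded in $H$, so $\{u_n^\pm\}$ are bounded as well. By reflexivity pass to a subsequence with $u_n\rightharpoonup u$ in $H$, and by Lemma~\ref{lm2.1}(ii), $u_n^\pm\to u^\pm$ strongly in $L^q(\R^N)$ for every $q\in[2,2_s^*)$; in particular $u_n^\pm\to u^\pm$ in $L^2$ and $L^p$, so (using $(f_1),(f_2)$ and a standard Vitali/Krasnoselskii argument) $\int f(x,u_n^\pm)u_n^\pm\to\int f(x,u^\pm)u^\pm$ and $\int F(x,u_n^\pm)\to\int F(x,u^\pm)$. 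The crucial point is that the lower bound $\|u_n^\pm\|_{L^{p}}\ge c>0$ — which follows from the uniform bound $\|u_n^\pm\|_H\ge\rho_0$ combined with $\langle I'_b(u_n),u_n^\pm\rangle=0$ and $(f_1)$, forcing $\int f(x,u_n^\pm)u_n^\pm\ge$ const${}>0$, hence $\int|u_n^\pm|^p\ge$ const${}>0$ — survives the passage to the limit, giving $\int|u^\pm|^p>0$, so $u^\pm\neq0$.

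Then, by weak lower semicontinuity of the norm, $A^\pm(u)\le\liminf A^\pm(u_n)$, $B^\pm(u)\le\liminf B^\pm(u_n)$, and one checks (using weak convergence of $(-\Delta)^{s/2}u_n^\pm$ in $L^2$) that $C(u)\le\liminf C(u_n)$ as well, or at least that the full bracket behaves lower-semicontinuously; combined with the strong convergence of the $f$-terms this yields $\langle I'_b(u),u^\pm\rangle\le\liminf\langle I'_b(u_n),u_n^\pm\rangle=0$. Now apply Lemma~\ref{lm3.1} to $u$ (legitimate since $u^\pm\neq0$): there is a unique pair $(\alpha_u,\beta_u)$ of positive numbers with $\alpha_u u^++\beta_u u^-\in\mathcal N^b_{nod}$, and by Lemma~\ref{lm3.2} (applicable because $\langle I'_b(u),u^\pm\rangle\le0$) we have $0<\alpha_u,\beta_u\le1$. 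Finally I would combine Lemma~\ref{lm3.3} with the splitting inequality \eqref{eqs1.16}--\eqref{eqs1.17} and the convergences to estimate
\begin{align*}
c_{nod}^b\le I_b(\alpha_u u^++\beta_u u^-)
&\le \liminf_{n\to\infty}\Big[I_b(\alpha_u u_n^++\beta_u u_n^-)\Big]\\
&\le \liminf_{n\to\infty} I_b(u_n^++u_n^-) = \liminf_{n\to\infty} I_b(u_n)=c_{nod}^b,
\end{align*}
where the middle inequality uses that $(1,1)$ maximizes $(\alpha,\beta)\mapsto I_b(\alpha u_n^++\beta u_n^-)$ over $\R_+\times\R_+$ (Lemma~\ref{lm3.3} applied to $u_n$, noting $u_n\in\mathcal N^b_{nod}$) so $I_b(\alpha_u u_n^++\beta_u u_n^-)\le I_b(u_n)$, and the first inequality uses the strong $L^q$-convergence of $u_n^\pm$ to pass $\alpha_u A^\pm(u_n)+\dots\to\dots$ — here care is needed because $A^\pm$ is only weakly lsc, so one actually argues that $\limsup_n I_b(\alpha_u u_n^++\beta_u u_n^-)\le I_b(\alpha_u u^++\beta_u u^-)$ would be false in general; instead one uses lsc of $I_b$ along $\alpha_u u_n^++\beta_u u_n^-\rightharpoonup \alpha_u u^++\beta_u u^-$ to get $I_b(\alpha_u u^++\beta_u u^-)\le\liminf I_b(\alpha_u u_n^++\beta_u u_n^-)\le\liminf I_b(u_n)=c_{nod}^b$. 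This chain forces equality throughout, so $I_b(\alpha_u u^++\beta_u u^-)=c_{nod}^b$ with $\alpha_u u^++\beta_u u^-\in\mathcal N^b_{nod}$, i.e. the infimum is achieved (and as a byproduct $\alpha_u=\beta_u=1$, so $u$ itself is the minimizer).

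The main obstacle I anticipate is handling the nonlocal coupling term $C(u)$ and the quartic $b$-terms under weak convergence: unlike the semilinear case, $I_b$ is not simply a sum of a weakly lsc quadratic part and a weakly continuous part, so one must argue carefully that the relevant combination $b(A^++2C+A^-)(A^\pm+C)+aA^\pm+B^\pm$ is weakly lower semicontinuous (it is, since $\|(-\Delta)^{s/2}v\|_{L^2}^2$ is a convex continuous functional hence weakly lsc, and products/sums of nonnegative weakly lsc functionals with the right monotonicity are weakly lsc), and to make sure that strict inequalities like \eqref{eqs1.16}, rather than degenerating, are used only where an inequality ($\le$) suffices. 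A secondary technical point is verifying the strong convergence of the nonlinear terms $\int f(x,u_n^\pm)u_n^\pm$ and $\int F(x,u_n^\pm)$ under only $L^q$-convergence on all of $\R^N$; this is where $(f_1)$--$(f_2)$ and the global $L^q$ compactness from Lemma~\ref{lm2.1}(ii) (rather than mere local compactness) are essential.
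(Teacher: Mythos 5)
Your proposal is correct and follows the paper's overall strategy: a positive lower bound for $c^b_{nod}$ via $I_b(u)-\tfrac14\langle I_b'(u),u\rangle\ge\tfrac a4\|u\|_H^2$, a bounded minimizing sequence, the global $L^q$-compactness of Lemma~\ref{lm2.1}(ii) to pass the nonlinear terms to the limit and to preserve $u^\pm\neq0$, and projection of the weak limit back onto $\mathcal{N}^b_{nod}$ via Lemma~\ref{lm3.1}. The genuine divergence is in the closing step. The paper establishes $\langle I_b'(u),u^\pm\rangle\le0$ by Fatou's lemma, invokes Lemma~\ref{lm3.2} to get $\alpha_u,\beta_u\le1$, and then compares $I_b(\bar u)-\tfrac14\langle I_b'(\bar u),\bar u\rangle$ with $\liminf_n\big[I_b(u_n)-\tfrac14\langle I_b'(u_n),u_n\rangle\big]$ using the monotonicity of $s\mapsto sf(x,s)-4F(x,s)$; the bounds $\alpha_u,\beta_u\le1$ are essential there. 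You instead use the maximality of $(1,1)$ from Lemma~\ref{lm3.3} applied to each $u_n\in\mathcal{N}^b_{nod}$, which gives $I_b(\alpha_u u_n^++\beta_u u_n^-)\le I_b(u_n)$ for \emph{every} positive pair, combined with weak lower semicontinuity of $I_b$ along the sequence $\alpha_u u_n^++\beta_u u_n^-\rightharpoonup\alpha_u u^++\beta_u u^-$ (legitimate because the $F$-term converges strongly and the remaining terms are convex and continuous). This is a clean alternative whose bonus is that it does not actually need $\alpha_u,\beta_u\le1$, hence Lemma~\ref{lm3.2} becomes dispensable in this proof. One small correction: for $\langle I_b'(u),u^\pm\rangle\le0$, the right justification for the cross term $C$ is Fatou's lemma applied to the pointwise nonnegative double-integral representation of $\int_{\R^N}(-\Delta)^{s/2}u^+(-\Delta)^{s/2}u^-\,dx$ together with a.e.\ convergence, not weak $L^2$-convergence of $(-\Delta)^{s/2}u_n^\pm$ by itself (weak convergence of both factors in a bilinear form gives no inequality); your later remarks about nonnegativity show you have the correct mechanism in mind, so this is a matter of phrasing rather than a gap.
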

\begin{proof}
For every $u\in \mathcal{N}^b_{nod}$, we have
$\langle I'_b(u),u\rangle=0$.
Then by $(f_1),(f_2)$ and Lemma \ref{lm2.1}, we get
\begin{align}
a\|u\|^{2}_H &\le
a\int_{\R^{N}} |(-\Delta)^{s/2}u|^2dx+\int_{\R^{N}}V(x)|u|^2 dx+b\Big(\int_{\R^{N}}
|(-\Delta)^{s/2}u|^2dx\Big)^2 \notag\\
&=\int_{\R^{N}}f(x,u)u dx\notag\\
&\le \varepsilon \int_{\R^{N}}|u|^2dx
+C_\varepsilon \int_{\R^{N}}|u|^{p}dx,\label{eqs3.21}
\end{align}
where $C_{\varepsilon}$ is a positive constant depending only on $\varepsilon$.
Choosing $\varepsilon$ sufficiently small such that
$\ds \varepsilon\|u\|^{2}_{L^{2}(\R^N)} \le
\frac{a}{2}\|u\|^{2}_H$, we can then deduce
that there exists a constant $\mu>0$ such that $\|u\|^{2}_H\ge \mu$. And by $(f_4)$, for $s\neq0$, we have
\begin{equation*}
f(x,s)-4F(x,s)\ge 0.
\end{equation*}
Hence
\begin{equation}\label{eqs3.22}
\ds I_b(u)=I_b(u)-\frac{1}{4}
\langle I'_b(u),u\rangle
\ge \frac{a}{4} \|u\|^{2}_H
\ge \frac{a\mu}{4} ,
\end{equation}
which implies that $c_{nod}^{b}\ge\frac{1}{4} \mu>0.$

Let $\{u_n\}\subset \mathcal{N}^b_{nod}$ be a minimizing sequence such that $I_b(u_n)
\rightarrow c_{nod}^{b}.$ By (\ref{eqs3.22}), we have $\{u_n\}$ is bounded in $H$. Utilizing Lemma \ref{lm2.1} and the properties of $L^p$-space, up to a subsequence, we have
\begin{align}
&u_{n}^{\pm} \rightharpoonup u_{b}^{\pm}\ \  \mathrm{weakly} \ \mathrm{in}\ H,\qquad
u_{n}^{\pm} \rightarrow u_{b}^{\pm}\ \ \mathrm{a.e.}\ \mathrm{in}\ \R^{N},\notag\\
&u_{n}^{\pm} \rightarrow u_{b}^{\pm}\ \  \mathrm{strongly} \ \mathrm{in}\ L^q({\R^N}),\quad\text{for}\ \ q\in[2,2^*_s), \label{eqs3.23}\\
&(-\Delta)^{s/2}u_{n}^{\pm} \rightarrow
(-\Delta)^{s/2}u_{b}^{\pm}\ \ \mathrm{a.e.}\ \mathrm{in}\ \R^{N}.\notag
\end{align}
Furthermore, the conditions $(f_1)-(f_2)$ combined with the compactness lemma of Strauss \cite{SW} gives that
\begin{equation}\label{eqs3.24}
\lim\limits_{n\rightarrow\infty}\int_{\R^{N}}f(x,u_{n}^{\pm})u_{n}^{\pm}dx=
\int_{\R^{N}}f(x,u_{b}^{\pm})u_{b}^{\pm}dx,
\ \lim\limits_{n\rightarrow\infty}\int_{\R^{N}}F(x,u_{n}^{\pm})dx=
\int_{\R^{N}}F(x,u_{b}^{\pm})dx.
\end{equation}
Since ${u_n}\in \mathcal{N}^b_{nod}$, we have $\langle I'_{b}(u_n),u_n^{\pm}\rangle=0$, that is,
\begin{align}
a&\Big(   \int_{\R^{N}} |(-\Delta)^{s/2}(u_n^+)|^2dx +   \int_{\R^{N}} (-\Delta)^{s/2}(u_n^{+})
(-\Delta)^{s/2}(u_n^{-})dx
\Big)+\int_{\R^{N}} V(x)|u_n^{+}|^2dx\notag\\
&\ \ +b\Big(\int_{\R^{N}} |(-\Delta)^{s/2}u_n|^2dx\Big)
\Big(\int_{\R^{N}} |(-\Delta)^{s/2}(u_n^+)|^2dx +   \int_{\R^{N}} (-\Delta)^{s/2}(u_n^{+})
(-\Delta)^{s/2}(u_n^{-})dx
\Big)\notag\\
&=\int_{\R^{N}}f(x,u_n^{+})u_n^{+}dx\ ,\label{eqs3.25}
\end{align}
and
\begin{align}
a&\Big(   \int_{\R^{N}} |(-\Delta)^{s/2}(u_n^-)|^2dx +   \int_{\R^{N}} (-\Delta)^{s/2}(u_n^{-})
(-\Delta)^{s/2}(u_n^{+})dx
\Big)+\int_{\R^{N}} V(x)|u_n^{+}|^2dx\notag\\
&\ \ +b\Big(\int_{\R^{N}} |(-\Delta)^{s/2}u_n|^2dx\Big)
\Big(\int_{\R^{N}}|(-\Delta)^{s/2}(u_n^-)|^2dx + \int_{\R^{N}} (-\Delta)^{s/2}(u_n^{-})
(-\Delta)^{s/2}(u_n^{+})dx
\Big)\notag\\
&=\int_{\R^{N}}f(u_n^{+})u_n^{+}dx.\label{eqs3.26}
\end{align}
With a similar argument as (\ref{eqs3.21}), there exists a constant $l>0$ such that $\|u_n^{\pm}\|^2_H\ge l$ for all $n\in\N$.
From (\ref{eqs3.25}), we obtain
that $\int_{\R^{N}}f(x,u_n^{\pm})u_n^{\pm}\ge l$. Hence by (\ref{eqs3.24}), we have $\int_{\R^{N}}f(u_b^{\pm})u_b^{\pm}\ge l$, i.e. $u_b^{\pm}\neq 0$.

From Lemma \ref{lm3.1}, there exist $\alpha_{u_b},\beta_{u_b}>0$ such that
$\ds \bar{u}_b:=\alpha_{u_b} u_b^+ + \beta_{u_b} u_b^- \in
\mathcal{N}^b_{nod},$  that is,
$$
\langle I'_b(\alpha_{u_b} u_b^+ + \beta_{u_b} u_b^-),\alpha_{u_b} u_b^+ \rangle=
\langle I'_b(\alpha_{u_b} u_b^+ + \beta_{u_b} u_b^-),\beta u_b^-\rangle=0.
$$
Now, we show that $\alpha_{u_b},\beta_{u_b} \le 1$. In fact,
by (\ref{eqs3.23})-(\ref{eqs3.25}) and Fatou's Lemma, we conclude that
\begin{align}
a&\Big(   \int_{\R^{N}} |(-\Delta)^{s/2}(u^+)|^2dx +   \int_{\R^{N}} (-\Delta)^{s/2}(u^{+})
(-\Delta)^{s/2}(u^{-})dx
\Big)+\int_{\R^{N}} V(x)|u^{+}|^2dx\notag\\
&\ \  +b\Big(\int_{\R^{N}} |(-\Delta)^{s/2}u|^2dx\Big)
\Big(\int_{\R^{N}} |(-\Delta)^{s/2}(u^+)|^2dx +   \int_{\R^{N}} (-\Delta)^{s/2}(u^{+})
(-\Delta)^{s/2}(u^{-})dx
\Big)\notag\\
&\le\int_{\R^{N}}f(x,u^{+})u^{+}dx.\label{eqs3.27}
\end{align}
It follows from Lemma \ref{lm2.2} and (\ref{eqs3.27}) that
$\alpha_{u_b}\le 1$. Similarly, $\beta_{u_b}\le 1$.
It follows from $(f_4)$ that $H(s):=sf(x,s)-4F(x,s)$ is a non-negative function, which is also increasing in $|s|$. Hence we have
{\allowdisplaybreaks
\begin{align}
c_{nod}^{b} \le& I(\bar{u}_b)=I(\bar{u}_b)
-\frac{1}{4}\langle I'(\bar{u}_b),\bar{u}_b\rangle\notag\\
=&\frac{a}{4}\|\bar{u}_b\|_H^2+
\frac{1}{4}\int_{\R^N}\Big(
f(\bar{u}_b)\bar{u}_b
-4F(\bar{u}_b)\Big)dx\notag\\
=&\frac{a\alpha^2_{u_b}}{4}\| u^+_b\|_H^2+
\frac{a\alpha_{u_b}\beta_{u_b}}{2}
\int_{\R^N} (-\Delta)^{s/2}(u_b^{+})
(-\Delta)^{s/2}(u_b^{-})dx
+ \frac{a\beta^2_{u_b}}{4}
\| u^-_b\|_H^2  \notag\\
&+ \frac{1}{4}\int_{\R^N}\Big(
f(\alpha_{u_b}u^+_b)\alpha_{u_b}u^+_b
-4F(\alpha_{u_b}u^+_b)\Big)dx
+ \frac{1}{4}\int_{\R^N}\Big(
f(\beta_{u_b}u^-_b)\beta_{u_b}u^-_b
-4F(\beta_{u_b}u^-_b)\Big)dx\notag\\
\le&\frac{a}{4}\|u_b\|_H^2+
\frac{1}{4}\int_{\R^N}\Big(
f(u_b)u_b
-4F(u_b)\Big)dx\notag\\
\le&\liminf\limits_{n\rightarrow\infty}
\Big[I_b (u_n)-\frac{1}{4}
\langle I'_b (u_n), u_n \rangle
\Big]=c^b_{nod}\ .\notag
\end{align}}
From the analysis above, we conclude that $\alpha_{u_b}=\beta_{u_b}=1$. Thus,
$\bar{u}_b=u_b$ and $I_b(u_b)=c^b_{nod}$.
\end{proof}

\section{\label{sec4} Proofs of main theorems}

This section is devoted to prove our main results. We first prove that the minimizer $u_b$ for the problem
(\ref{eqs3.20}) is indeed a sign-changing solution of (1.1), that is, $I'_b(u_b)=0$.
\begin{proof}[\bf Proof of Theorem 1.1.]
Applying the quantitative deformation lemma (see \cite{WM}), we want to prove that $I'_b(u_b)=0$.
We first recall that $\langle I'_b(u_b),u^{+}_b\rangle=\langle I'_b(u_b),u^{-}_b\rangle=0$. Then it follows from Lemma \ref{lm3.3} that
\begin{equation}\label{eqs4.1}
I_b(\alpha u_b^+ +\beta u_b^-)<I_b(u_b^+ + u_b^-)=
c_{nod}^b
\end{equation}
for $(\alpha,\beta)
\in(\R_+\times\R_+)$ and $(\alpha,\beta)\neq(1,1).$ If $I'_b(u_b)\neq0$, then there exist $\delta>0$ and $\lambda>0$ such that
\begin{equation*}
\|I'_b(u_b)\|\ge \lambda \ \ \ \mathrm{for \ all}\ \|v-u_b\|_H\le 3\delta.
\end{equation*}
Let $D:=(\frac12,\frac32)\times
(\frac12,\frac32)$ and $g(\alpha,\beta):=
\alpha u_b^+ +\beta u_b^-$. In view of lemma \ref{lm3.3} again, we have
\begin{equation}\label{eqs4.2}
m^b:=\max\limits_{\partial D}
I_b\circ g <c_{nod}^b.
\end{equation}
Let $\varepsilon=\min\{(c^b_{nod}-m^b)/2,
\lambda\delta/8\}$ and $S=B(u_b,\delta)$. It follows from lemma 2.2 in \cite{WM} that there is a continuous mapping $\eta$ such that
\begin{enumerate}
	\item  [($a$)] \ $\eta(1,u)=u$ if
	$u\neq I_b^{-1}([c^b_{nod}-2\varepsilon,
	c^b_{nod}+2\varepsilon])\cap S_{2\delta}$;
	\item [($b$)] \ $\eta(1,I_b^{c^b_{nod}+\varepsilon}\cap S)
	\subset I_b^{c^b_{nod}-\varepsilon}$;
	\item [($c$)] \ $I_b(\eta(1,u))\le I_b(u)$ for all $u\in H$.
\end{enumerate}
It is clear that
\begin{equation}\label{eqs4.3}
\max\limits_{(\alpha,\beta)\in \overline{D}}
I_b\big( \eta(1,g(\alpha,\beta))
\big)<c^b_{nod}.
\end{equation}

Next, we prove that $\eta(1,g(D))\cap \mathcal{N}^b_{nod}\neq\varnothing,$ which contradicts to the definition of $c^b_{nod}$. Let us define $h(\alpha,\beta):=\eta(1,g(\alpha,\beta))$ and

\begin{align}
\Psi_0(\alpha,\beta):=&
\Big(\langle I'_b\big(g(\alpha,\beta)\big),u^{+}_b\rangle, \langle I'_b\big(g(\alpha,\beta)\big),u^{-}_b\rangle
\Big)\notag \\
=&\Big(
\langle I'_b\big(\alpha u^+_b +
\beta u^{-}_b \big),u^{+}_b\rangle, \langle I'_b\big(\alpha u^+_b +
\beta u^{-}_b \big),u^{-}_b\rangle
\Big),\notag
\end{align}
$$
\Psi_1(\alpha,\beta):=
\Big(\frac{1}{\alpha} \langle I'_b\big(h(\alpha,\beta)\big),h^{+}(\alpha,\beta)\rangle, \frac{1}{\beta}\langle I'_b\big(h(\alpha,\beta)\big),h^{-}(\alpha,\beta)\rangle
\Big).
$$
By Lemma \ref{lm3.1} and the degree theory, this implies that $\deg (\Psi_0,D,0)=1$. It follows, in view of  (\ref{eqs4.2}), that $g=h$ on $\partial D$, from which we obtain
$\deg (\Psi_1,D,0)=\deg (\Psi_0,D,0)=1$.
Therefore, $\Psi_1(\alpha_0,\beta_0)=0$ for some $(\alpha_0,\beta_0)\in D$, so that
$\eta(1,g(\alpha_0,\beta_0))=h(\alpha_0,\beta_0)\in \mathcal{N}^b_{nod}$, which is a contradiction. We have thus proved that $u_b$ is a critical point of $I_b$. Moreover, $u_b$ is a sign-changing solution for problem (1.1).
\end{proof}

By Theorem 1.1, we obtain a least energy sign-changing solution of problem (1.1). Hence,
Theorem 1.2 follows immediately if we establish the strict inequality $c^b_{nod}>
2c^b$, where $c^b=\inf \limits_{u \in    \mathcal{N}^b} I(u)$.

\begin{proof}[\bf Proof of Theorem 1.2.]
Recall that $\mathcal{N}^b$ denotes the Nehari manifold associated to (1.1) and $c^b=\inf \limits_{u \in  \mathcal{N}^b} I(u)$.
Then, by a similar argument to that in the proof of Lemma \ref{lm3.4}, there exists $v_b \in \mathcal{N}^b$ such that
$I_b(v_b)=c^b >0$. We assert that $v_b$ is actually a ground state solution of (1.1).
In fact, by virtue of section 4 in \cite{WM}, $v_b$ is a critical point of
$I_b$ on $H$, or equivalently, $I'(v_b)=0$.

Reviewing Theorem 1.1 again, we know that $u_b$ is a least sign-changing solution of
problem (1.1). Utilizing the same method in Lemma \ref{lm3.1}, we conclude that there exists  $\alpha_{u^{+}_b}>0$ such that
$\alpha_{u^{+}_b} u^{+}_b$ $\in \mathcal{N}^b$. And we can prove that there  exists  $\beta_{u^{-}_b}>0$ such that
$\beta_{u^{-}_b} u^{-}_b$ $\in \mathcal{N}^b$ analogously. Moreover, Lemma \ref{lm3.3} implies that
$\alpha_{u^{+}_b}, \beta_{u^{-}_b} \in
(0,1)$.

Therefore, in view of Lemma \ref{lm3.4}, we obtain that
\begin{equation}\label{eqs4.4}
2c^b \le I_b(\alpha_{u^{+}_b} u^{+}_b)
+ I_b(\beta_{u^{-}_b} u^{-}_b)
\le I_b(\alpha_{u^{+}_b} u^{+}_b +\beta_{u^{-}_b} u^{-}_b)
\le I(u^{+}_b +u^{-}_b)=c^b_{nod}.
\end{equation}
It follows that $c^b>0$ which cannot be achieved by a sign-changing solution. Hence, we complete the proof.
\end{proof}
Finally, we close this section with the proof of Theorem 1.3. In the following, we regard
$b>0$ as a parameter in problem (1.1).

\begin{proof}[\bf Proof of Theorem 1.3.]
We shall proceed through several claims on
analyzing the convergence property of $u_b$ as $b \rightarrow 0$, where $u_b$ is the least energy sign-changing solution obtained in Theorem 1.1.

\noindent{\bf Claim 1.}
For any sequence $\{b_n\}$ as $b_n\searrow 0$, $\{u_{b_n}\}$ is bounded in $H$.
\begin{proof}
We select a nonzero function $\psi \in \mathcal{C}_c^{\infty} $ with $\psi ^{\pm} \neq 0$.
By the assumptions $(f_3)$ and $(f_4)$, we deduce that, for any
$b\in [0,1]$, there exists a pair $(\lambda_1,\lambda_2)$ independent of $b$,
such that
\begin{equation*}
\langle I'_b(\lambda_1 \psi^+ + \lambda_2 \psi^-),\lambda_1 \psi^+ \rangle<0
\quad \mathrm{and}\quad
\langle I'_b(\lambda_1 \psi^+ + \lambda_2 \psi^-),\lambda_2 \psi^- \rangle<0.
\end{equation*}
Then by virtue of Lemma \ref{lm2.2}, we get that, for any $b\in [0,1]$, there exists a unique pair $(\alpha_{\psi}(b),\beta_{\psi}(b)) \in (0,1]\times (0,1]$ such that
\begin{equation}\label{eqs4.5}
\overline{\psi}:=\alpha_{\psi}(b)\lambda_1 \psi^+ +\beta_{\psi}(b) \lambda_2 \psi^- \in \mathcal{N}^b_{nod}.
\end{equation}
Recall that $(f_1)$ implies that
$f(s)\le C_1 |s|+C_2 |s|^{p-1}$ and $F(s)\le C_1 |s|^2+C_2 |s|^{p}$, where $C_1$ and $C_2$
are positive constants.
Then it follows that
\begin{align}
I_b(u_b)&\le I_b(\overline{\psi})=I_b(\overline{\psi})-
\langle I'_b(\overline{\psi}),\overline{\psi} \rangle
\notag\\
&=\frac{a}{4}\|\overline{\psi}\|^2_H + \int_{\R^{N}}
\left(f(x,\overline{\psi})\overline{\psi}-4F(x,\overline{\psi})\right)dx\notag\\
&=\frac{a}{4}\| \lambda_1 \psi^+\|_H^2+
\frac{a}{2}
\int_{\R^N} (-\Delta)^{s/2}(\lambda_1 \psi^+)
(-\Delta)^{s/2}(\lambda_2 \psi^-)dx
+ \frac{a}{4}
\| \lambda_2 \psi^-\|_H^2  \notag\\
&\quad + C_1\int_{\R^N}\left(
\lambda^2_1 |\psi^{+}|^2 +
\lambda^2_2 |\psi^-|^2\right)dx
+ C_2\int_{\R^N}\left(
\lambda^p_1 |\psi^{+}|^p +
\lambda^p_2 |\psi^-|^p
\right)dx\notag\\
&:=C_0,\label{eqs4.6}
\end{align}
where $C_0$ is a positive constant independent of $b$. Then we write, as $n\rightarrow \infty$,
\begin{equation}\label{eqs4.7}
C_0 +1\ge I_{b_n}(u_{b_n}) =
I_{b_n}(u_{b_n})-\frac{1}{4}\langle I'_{b_n}(u_{b_n}),u_{b_n} \rangle\ge
\frac{a}{4} \|u_{b_n}\|^2_H,
\end{equation}
from which the claim follows.
\end{proof}

\vskip 0.2cm
\noindent{\bf Claim 2.}
 (\ref{eqs1.22}) possesses one sign-changing solution $u_0$.

\begin{proof}
Going if necessary to a subsequence, thanks to Lemma \ref{lm2.1}, we conclude that there exists $u_0 \in H$, such that
\begin{align}
&u_{b_n} \rightharpoonup u_{0}\ \  \mathrm{weakly} \ \mathrm{in}\ H, \notag\\
&u_{b_n} \rightarrow u_{0}\ \  \mathrm{strongly} \ \mathrm{in}\ L^q({\R^N}) \ \ \text{for}\ \ q\in[2,2^*_s),\label{eqs4.8}\\
&u_{b_n} \rightarrow u_{0}\ \ \mathrm{a.e.}\ \mathrm{in}\ \R^{N}. \notag
\end{align}
Since $u_{b_n}$ is a weak solution of (1.1) with $b=b_n$, we then have
\begin{multline}
a \int_{\R^N}(-\Delta)^{s/2}u_{b_n}(-\Delta)^{s/2}\phi dx\\
+b_n\int_{\R^N}|(-\Delta)^{s/2}u_{b_n}|^2dx
\int_{\R^N}(-\Delta)^{s/2}u_{b_n}(-\Delta)^{s/2}\phi dx
+\int_{\R^N}V(x)u_{b_n}^2 dx\\
=\int_{\R^N}f(x,u_{b_n})\phi dx,\label{eqs4.9}
\end{multline}
for all $\phi \in \mathcal{C}_c^{\infty}(\R^N)$.
From (\ref{eqs3.24}), (\ref{eqs4.8}), (\ref{eqs4.9}) and Claim 1, we see that
\begin{equation}\label{eqs4.10}
a \int_{\R^N}(-\Delta)^{s/2}u_{0}(-\Delta)^{s/2}\phi dx
+\int_{\R^N}V(x)u_{0}^2 dx=\int_{\R^N}f(x,u_0)\phi dx\ 
\end{equation}
for all $\phi \in \mathcal{C}_c^{\infty}(\R^N)$,
which yields that $u_0$ is a weak solution of (\ref{eqs1.22}). It remains now to establish that $u_0^{\pm}\neq 0.$ Indeed, via a similar argument to that in the proof in Lemma \ref{lm3.4}, we conclude that
$\ds\int_{\R^N}f(u_0^{\pm})u_0^{\pm}>c>0$, where $c$ is a positive constant. Therefore, we complete the proof of the claim.
\end{proof}

\vskip 0.2cm

\noindent{\bf Claim 3.}
(\ref{eqs1.22}) possesses a least energy sign-changing solution $v_0$, and there exists a unique pair $(\alpha_{b_n},\beta_{b_n}) \in \R_+\times\R_+$  such that $\alpha_{b_n}v^{+}_0+\beta_{b_n}v^{-}_0
\in \mathcal{N}^{b_n}_{nod}$. Moreover,
($\alpha_{b_n},\beta_{b_n}) \rightarrow (1,1)$ as $n \rightarrow \infty.$

\begin{proof}
With a similar argument to the proof of Theorem 1.1, we have that (\ref{eqs1.22}) possesses a least energy sign-changing solution $v_0$, where $I(v_0) = c_{nod}^0$ and $I'(v_0)=0$.
Then, by Lemma \ref{lm2.1}, we can easily obtain the existence and uniqueness of the pair ($\alpha_{b_n},\beta_{b_n}$) such that
$\alpha_{b_n}v^{+}_0+\beta_{b_n}v^{-}_0
\in \mathcal{N}^{b_n}_{nod}$. Besides,
we have $\alpha_{b_n},\beta_{b_n}>0.$
Then the claim will follow once we have
proved that $(\alpha_{b_n},\beta_{b_n})\rightarrow (1,1)$ as $n \rightarrow \infty.$
In fact, since $\alpha_{b_n}v^{+}_0+\beta_{b_n}v^{-}_0
\in \mathcal{N}^{b_n}_{nod}$, we then have
\begin{align}
a&\Big(\alpha^2_{b_n}\int_{\R^N}|(-\Delta)^{s/2}(v^+_0)|^2dx+\alpha_{b_n}\beta_{b_n}\int_{\R^N}(-\Delta)^{s/2}(v^+_0)(-\Delta)^{s/2}(v^-_0)dx
\Big)\notag\\
&\quad+b_n
\left(\alpha^2_{b_n}\int_{\R^N}|(-\Delta)^{s/2}(v^-_0)|^2dx+\alpha_{b_n}\beta_{b_n}\int_{\R^N}(-\Delta)^{s/2}(v^+_0)(-\Delta)^{s/2}(v^-_0)dx\right)\notag\\
&\ \ \times\Big(\int_{\R^N}|(-\Delta)^{s/2}(\alpha_{b_n}v^{+}_0+\beta_{b_n}v^{-}_0)|^2dx\Big)
+\alpha^2_{b_n}\int_{\R^N}V(x)|v^+_0|^2dx\notag\\
&=\int_{\R^N}f(x,\alpha_{b_n}v^{+}_0)\alpha_{b_n}v^{+}_0 dx\ ,\label{eqs4.11}
\end{align}
and
\begin{align}
a&\Big(\beta^2_{b_n}\int_{\R^N}|(-\Delta)^{s/2}(v^-_0)|^2dx+\alpha_{b_n}\beta_{b_n}\int_{\R^N}(-\Delta)^{s/2}(v^+_0)(-\Delta)^{s/2}(v^-_0)dx
\Big)\notag\\
&\quad+b_n
\Big(\beta^2_{b_n}\int_{\R^N}|(-\Delta)^{s/2}(v^+_0)|^2dx+\alpha_{b_n}\beta_{b_n}\int_{\R^N}(-\Delta)^{s/2}(v^+_0)(-\Delta)^{s/2}(v^-_0)dx\Big)\notag\\
&\ \  \times\Big(\int_{\R^N}|(-\Delta)^{s/2}(\alpha_{b_n}v^{+}_0+\beta_{b_n}v^{-}_0)|^2dx\Big)
+\alpha^2_{b_n}\int_{\R^N}V(x)|v^+_0|^2dx\notag\\
&=\int_{\R^N}f(x,\beta_{b_n}v^{-}_0)\beta_{b_n}v^{-}_0 dx.\label{eqs4.12}
\end{align}
From $(f_3)$-$(f_4)$, and recalling that $b_n \rightarrow 0$ as $n \rightarrow
\infty,$ we deduce that $\{\alpha_{b_n}\}$ and $\{\beta_{b_n}\}$ are bounded sequences.
Up to a subsequence, suppose that $\alpha_{b_n} \rightarrow \alpha_0$ and $\beta_{b_n} \rightarrow \beta_0$, then it follows from (\ref{eqs4.11}) and (\ref{eqs4.12}) that
\begin{align}
a \Big(\alpha^2_0 \int_{\R^N}|(-\Delta)^{s/2}(v^+_0)|^2dx
+&\alpha_{0}\beta_{0}\int_{\R^N}(-\Delta)^{s/2}(v^+_0)(-\Delta)^{s/2}(v^-_0)dx
\Big)\notag\\
&+\alpha^2_0 \int_{\R^N} V(x)|v^+_0|^2dx
=\int_{\R^N}
f(x,\alpha_0 v^+_0)\alpha_0 v^+_0dx\ ,\label{eqs4.13}
\end{align}
and
\begin{align}
a \Big(\beta^2_0 \int_{\R^N}|(-\Delta)^{s/2}(v^-_0)|^2dx
+&\alpha_{0}\beta_{0}\int_{\R^N}(-\Delta)^{s/2}(v^+_0)(-\Delta)^{s/2}(v^-_0)dx
\Big)\notag\\
&+\beta^2_0 \int_{\R^N} V(x)|v^-_0|^2dx
=\int_{\R^N}
f(x,\beta_0 v^-_0)\beta_0 v^-_0dx.\label{eqs4.14}
\end{align}
Noticing that $v_0$ is a solution of (\ref{eqs1.22}), we then have
\begin{align}
a \Big(\int_{\R^N}|(-\Delta)^{s/2}(v^+_0)|^2dx
+\int_{\R^N}(-\Delta)&^{s/2}(v^+_0)(-\Delta)^{s/2}(v^-_0)dx
\Big)\notag\\
&+ \int_{\R^N} V(x)|v^+_0|^2dx
=\int_{\R^N}
f(x, v^+_0) v^+_0dx,\label{eqs4.15}
\end{align}
and
\begin{align}
a \Big(\int_{\R^N}|(-\Delta)^{s/2}(v^-_0)|^2dx
+\int_{\R^N}(-\Delta)&^{s/2}(v^+_0)(-\Delta)^{s/2}(v^-_0)dx
\Big)\notag\\
&+ \int_{\R^N} V(x)|v^-_0|^2dx
=\int_{\R^N}
f(x, v^-_0) v^-_0dx.\label{eqs4.16}
\end{align}
Moreover, by the assumptions ($f_3$) and ($f_4$), we conclude that $f(s)/|s|^3$ is nondecreasing in $|s|$. Thus, in view of (\ref{eqs4.13})-(\ref{eqs4.16}), we can easily check that
$(\alpha_0,\beta_0)=(1,1)$, and the claim follows.
\end{proof}

We now come back the proof of Theorem 1.3. We assert that $u_0$ obtained in Claim 2 is a least energy solution of problem (1.2). In fact, by virtue of Claim 3 and Lemma \ref{lm2.3}, we have
\begin{equation*}
\ds I_0(v_0)\le I_0(u_0)= \lim\limits_{n\rightarrow \infty}I_{b_n}(u_{b_n})
\le \lim\limits_{n\rightarrow \infty}I_{b_n}(\alpha_{b_n}v^{+}_0+\beta_{b_n}v^{-}_0)
= \lim\limits_{n\rightarrow \infty}I_{0}(v^{+}_0+v^{-}_0)=I_0(v_0),
\end{equation*}
which yields Theorem \ref{th1.3}.
\end{proof}

{\bf Acknowledgment}

The authors would like to thank Prof. Yinbin Deng for many inspiring discussions with him and his constant encouragement and support, without which this paper would not be possible.

\end{document}